\documentclass{amsart}
\usepackage{graphicx}
\usepackage{amsmath,amssymb}
\usepackage{amsthm}

\vfuzz2pt 
\hfuzz2pt 

\newtheorem{thm}{Theorem}[section]
\newtheorem{cor}[thm]{Corollary}
\newtheorem{lem}[thm]{Lemma}
\newtheorem{prop}[thm]{Proposition}

\theoremstyle{definition}
\newtheorem{defn}[thm]{Definition}
\theoremstyle{remark}
\newtheorem{rem}[thm]{Remark}
\theoremstyle{example}
\newtheorem{exa}[thm]{Example}
\theoremstyle{conjecture}

\numberwithin{equation}{section}


\newcommand{\im}{{\rm Im}\,}
\newcommand{\re}{{\rm Re}\,}

\newcommand{\calA}{\mathcal A}
\newcommand{\calC}{\mathcal C}
\newcommand{\calF}{\mathcal F}
\newcommand{\calH}{\mathcal H}

\newcommand{\calJ}{\mathcal J}

\newcommand{\calQ}{\mathcal Q}

\newcommand {\C} {\mathbb C}
\newcommand {\R} {\mathbb R}
\newcommand {\X} {\mathbb X}

\newcommand {\N} {\mathbb N}
\newcommand {\D} {\mathbb D}


\begin{document}

\title[Unbounded Weighted Composition Operators]{Unbounded Weighted Composition Operators on Fock space}%

\date{\today}%

\author{Pham Viet Hai}%
\address[P. V. Hai]{ISE department at the National University of Singapore, 117576, Singapore.}%
\email{isepvh@nus.edu.sg}

\subjclass[2010]{47 B33, 47 B32, 30 D15}%

\keywords{Fock space, unbounded weighted composition operator, complex symmetry, selfadjointness, normality}%


\maketitle

\begin{abstract}
In this paper, we consider \emph{unbounded} weighted composition operators acting on Fock space, and investigate some important properties of these operators, such as $\calC$-selfadjoint (with respect to weighted composition conjugations), Hermitian, normal, and cohyponormal. In addition, the paper shows that unbounded normal weighted composition operators are contained properly in the class of $\calC$-selfadjoint operators with respect to weighted composition conjugations.
\end{abstract}

\section{Introduction}

\subsection{Complex symmetric operators}
In their papers \cite{GP1, GP2}, Garcia and Putinar undertook the general study of complex symmetric operators with many motivations coming from function theory, matrix analysis and other areas. A number of other authors have recently made significant contributions to theory as well as applications in quantum mechanics (see e.g. \cite{GPP}).

To proceed, we first recall some terminologies. Let $\calH$ be a separable complex Hilbert space endowed with inner product $\langle .,.\rangle$. The domain of an unbounded linear operator is denoted as $\text{dom}(\cdot)$. For two unbounded linear operators $F,G$, the notation $F\preceq G$ means that $G$ is an \emph{extension} of $F$ (see \cite[Section 1.1]{KS}). Furthermore, if $A,B$ are two bounded linear operators on $\calH$, then we define the operator $AFB$ by
$$
\text{dom}(AFB):=\{f\in\calH:Bf\in\text{dom}(F)\},\quad (AFB)f:=AF(Bf).
$$
Note that we also use this notation in the case when $A,B$ are anti-linear.

\begin{defn}\label{alin-conj} An anti-linear mapping $\calC\colon\calH\to\calH$ is called a \emph{conjugation}, if it is both involutive and isometric.
\end{defn}

\begin{defn}\label{defcso-new}
Let $S\colon\text{dom}(S)\subseteq\calH \to \calH$ be a closed, densely defined, linear operator and $\calC$ a conjugation. We say that the operator $S$ is \emph{$\calC$-symmetric} if $S\preceq\calC S^*\calC$, and  \emph{$\calC$-selfadjoint} if $S=\calC S^*\calC$.

In both cases, the unbounded operator $S$ is \emph{complex symmetric}, in the precise sense
$$ [Sx,y] = [x,Sy],\quad\forall x,y \in \text{dom}(S),$$
where $[\cdot,\cdot]$ is the complex bilinear symmetric form induced by the conjugation $\calC$ and the inner product $\langle \cdot,\cdot\rangle$, namely
$$ [x,y] := \langle x, {\calC} y \rangle,\quad\forall x,y \in {\calH}.$$
\end{defn}

Clearly, a $\calC$-sefadjoint operator is always $\calC$-symmetric, but the converse statement can be wrong. In fact, the class of $\calC$-symmetric operators is much larger: it contains properly $\calC$-sefadjoint operators. In contrast to the usual symmetry, a $\calC$-symmetric operator always has a $\calC$-selfadjoint extension \cite{IMG1, IMG2} (see also \cite{IK, DR}).

The first step toward understanding a complex symmetric operator is to determine its internal structure. An effective method to this problem is to characterize which special operators are complex symmetric. Through a series of works, many well-known operators, such as Hermitian operators, unitary operators, and normal operators, have been proved to belong to this class.

\subsection{Linear weighted composition operator}
Among well-known operators, the class of weighted composition operators can connect basic questions about linear operators to classical results from the theory of holomorphic functions (see e.g. \cite{CM}). Considered on function spaces, weighted composition operators provide new meanings to classical theorems (such as boundedness, compactness, closed graph, etc.). These operators form today a vast chapter of modern analysis, and they are defined as follows.
\begin{defn}
Let $\X$ be a Hilbert space of holomorphic functions on a domain set $U\subseteq\C$. We consider formal \emph{weighted composition expressions} of the form
$$
E(\psi,\varphi)f=\psi\cdot f\circ\varphi,
$$
where $\psi:U\to\C$, $\varphi:U\to U$ are holomorphic functions. We are concerned with the operators arising from the formal expression $E(\psi,\varphi)$ in $\X$. The following operator is called the \emph{maximal weighted composition operator} on $\X$ corresponding to the expression $E(\psi,\varphi)$:
$$
\text{dom}(W_{\psi,\varphi,\max})=\{f\in\X:E(\psi,\varphi)f\in\X\},
$$
$$
W_{\psi,\varphi,\max}f=E(\psi,\varphi)f,\quad\forall f\in\text{dom}(W_{\psi,\varphi,\max}).
$$
The domain $\text{dom}(W_{\psi,\varphi,\max})$ is called the \emph{maximal domain}.
\end{defn}

The operator $W_{\psi,\varphi,\max}$ is ``maximal" in the sense that it cannot be extended as an operator in $\X$ generated by the expression $E(\psi,\varphi)$. It should be noted that the domain is crucial for an unbounded operator. The same formal expression considered on different domains may generate operators with completely different properties. This note suggests to consider the weighted composition expressions on subspaces of the maximal domain.

\begin{defn}
The operator $W_{\psi,\varphi}$ is called an \emph{unbounded weighted composition operator} if $W_{\psi,\varphi}\preceq W_{\psi,\varphi,\max}$. In this situation, the domain $\text{dom}(W_{\psi,\varphi})$ is a subspace of $\text{dom}(W_{\psi,\varphi,\max})$, and the operator $W_{\psi,\varphi}$ is the restriction of the maximal operator $W_{\psi,\varphi,\max}$ on $\text{dom}(W_{\psi,\varphi})$.
\end{defn}


In order to call for investigations, Garcia and Hammond published the paper \cite{GH} with the title being an open question to the community of operator theory: ``Which weighted composition operators are complex symmetric?". These authors in \cite{GH} and Jung et al. in \cite{JKKL} explored independently an internal structure of \emph{bounded} complex symmetric weighted composition operators acting on Hardy spaces in the unit disk $\D$ with respect to the conjugation
\begin{equation}\label{conjugation-J}
\calJ f(z)=\overline{f(\overline{z})}.
\end{equation}
Later, in \cite{WY}, some of the results were extended to Hardy spaces in the unit ball, with the same kind conjugation (but defined in higher dimensions).

The structure of the conjugation $\calJ$ inspired the author to study in \cite{HK1} a generalization, namely \emph{anti-linear weighted composition operators} $\calA_{\xi,\eta}f=\xi\cdot\overline{f\circ\overline{\eta}}$
acting on the Fock space.

\subsection{Fock space}
Recall that the \emph{Fock space} $\calF^2$ consists of entire functions which are square integrable with respect to the Gaussian measure $\frac{1}{\pi}e^{-|z|^2}\;dV(z)$, where $dV$ is Lebesgue measure on $\C$. This is a functional Hilbert space, with the inner product and kernel functions given by
$$
\langle f,g\rangle = \frac{1}{\pi}\int_\C f(z)\overline{g(z)}e^{-|z|^2}\;dV(z),
$$
$$K_z^{[m]}(u)=u^m e^{\overline{z}u},\quad z,u\in\C,m\in\N,$$
respectively. Since $\|K_z\|=e^{|z|^2/2}$, we always have
\begin{equation}\label{point}
|f(z)|=|\langle f,K_z \rangle| \leq \|f\|e^{|z|^2/2},\quad\forall f\in\calF^2.
\end{equation}
Thus, convergence in the norm of $\calF^2$ implies a point convergence. For more information about Fock spaces, we refer the reader to the monograph \cite{KZ}.

A characterization of anti-linear weighted composition operators, which are conjugations on $\calF^2$ was given in \cite{HK1}. These operators are called as \emph{weighted composition conjugations}, and they are described as follows. For complex numbers $a,b,c$ satisfying
\begin{equation}\label{abc-cond}
|a|=1, \quad \bar{a}b+\bar{b}=0, \quad |c|^2e^{|b|^2}=1,
\end{equation}
the weighted composition conjugation is defined by
\begin{equation}\label{wcc-jormulas}
\calC_{a,b,c}f(z):=ce^{bz}\overline{f\left(\overline{az+b}\right)},\quad\forall f\in\calF^2.
\end{equation}
The class of weighted composition conjugations contains the conjugation $\calJ$ defined by \eqref{conjugation-J} as a very particular case.

In \cite{HK1}, the author characterized all bounded weighted composition operators, which are complex symmetric with respect to weighted composition conjugations. Naturally, one is also interested in determining whether there are any additional classes of \emph{unbounded} complex symmetric weighted composition operators on $\calF^2$.

\subsection{Content}
This paper investigates some important properties of \emph{unbounded} weighted composition operators on Fock space $\calF^2$, such as $\calC$-selfadjoint (with respect to weighted composition conjugations), Hermitian, normal, and cohyponormal. 

The rest of this paper is organized as follows. Section \ref{s2} is devoted to recalling basic properties of bounded weighted composition operators on $\calF^2$. We consider an unbounded weighted composition operator $W_{\psi,\varphi}$, and prove auxiliary results in Section 3. Theorem \ref{varphi-cohypo} shows that under certain conditions, the symbol $\varphi$ is affine, while Theorem \ref{ad-2} provides the concrete structure of the adjoint $W_{\psi,\varphi}^*$ in the case when $\psi$ is an exponential form and $\varphi$ is affine. In Sections \ref{s4}-\ref{s6}, we characterize \emph{maximal weighted composition operators}, which are $\calC_{a,b,c}$-selfadjoint (Theorem \ref{<->cs}), Hermitian (Theorem \ref{hermitian-op}), normal (Theorem \ref{cohypo-nor-equiv-}), cohyponormal (Theorem \ref{cohypo-nor-equiv}),  respectively. In addition, the study of \emph{unbounded weighted composition operators} with arbitrary domains is also carried out in Theorems \ref{abc-selfadjointness}, \ref{self-arbitrary-domain}, \ref{cohypo-nor-equiv--}, and \ref{normal-2-arbi}. It should be emphasized that the class of complex symmetric operators obtained here contains operators studied in \cite{HK1} as a proper subclass. Furthermore, it also includes properly unbounded normal weighted composition operators (Corollary \ref{cohypo-nor-equiv-cor}).

\section{Preliminaries}\label{s2}
Let $W_{\psi,\varphi}$ be a unbounded linear weighted composition operator, induced by two entire functions $\psi$, $\varphi$. In the whole paper, we always assume that $\psi\not\equiv0$. 

It is clear that the domain $\text{dom}(W_{\psi,\varphi})$ is a non-empty subspace of $\calF^2$ (since $0\in\text{dom}(W_{\psi,\varphi})$). In general, $\text{dom}(W_{\psi,\varphi})$ is a proper subspace of $\calF^2$. To give an example for this claim, we make use of the following useful lemma.

\begin{lem}[\cite{KHI}]\label{KHI-lem}
If the function $h$ is entire, then $e^h\in\calF^2$ if and only if $h(z)=\alpha z^2+\beta z+\gamma$ with $|\alpha|<1/2$.
\end{lem}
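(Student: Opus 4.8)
The plan is to translate the condition $e^h\in\calF^2$ into the finiteness of the Gaussian integral
$$
\frac{1}{\pi}\int_\C \abs{e^{h(z)}}^2 e^{-|z|^2}\,dV(z)=\frac{1}{\pi}\int_\C e^{2\re h(z)-|z|^2}\,dV(z),
$$
and then to analyze the exponent $2\re h(z)-|z|^2$. For the easy implication, suppose $h(z)=\al z^2+\be z+\gam$ with $|\al|<1/2$. Since $\re(\al z^2)\le|\al|\,|z|^2$, the exponent is dominated by $-(1-2|\al|)|z|^2+2\re(\be z)+2\re\gam$, and because $1-2|\al|>0$, completing the square separately in $\re z$ and $\im z$ writes the integral as a product of two convergent one–dimensional Gaussian integrals; hence $e^h\in\calF^2$.

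For the converse, suppose $e^h\in\calF^2$. The first task is to pin down the form of $h$. By the pointwise estimate \eqref{point}, $e^{\re h(z)}=\abs{e^{h(z)}}\le\norm{e^h}\,e^{|z|^2/2}$, so $e^h$ is an entire function of order at most $2$ with no zeros; Hadamard's factorization theorem then gives $e^h=e^{p}$ for a polynomial $p$ with $\deg p\le 2$, and since $h-p$ is an entire function taking values in the discrete set $2\pi i\Z$, it is constant, so $h$ is itself a polynomial $h(z)=\al z^2+\be z+\gam$ of degree at most $2$. (Alternatively, the Borel--Carath\'eodory inequality applied on $|z|=2r$ yields $\max_{|z|=r}|h(z)|=O(r^2)$, and Cauchy's estimates then kill the Taylor coefficients of order $\ge 3$.)

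It remains to show $|\al|<1/2$, the case $\al=0$ being immediate. When $\al\ne 0$ I would complete the square, $\al z^2+\be z=\al\big(z+\tfrac{\be}{2\al}\big)^2+\text{const}$, translate, and then rotate $z=e^{i\phi}\ze$ so that $\al e^{2i\phi}=|\al|$; writing $\ze=\xi+i\eta$, the exponent $2\re h(z)-|z|^2$ becomes
$$
(2|\al|-1)\xi^2-(2|\al|+1)\eta^2+(\text{affine in }\xi,\eta)+\text{const}.
$$
The $\eta$–integral converges unconditionally, whereas the $\xi$–integral is finite precisely when $2|\al|-1<0$; in particular the boundary case $|\al|=1/2$ fails, since then the $\xi$–integrand is $e^{c\xi}$ and $\int_\R e^{c\xi}\,d\xi=\infty$. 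Thus finiteness of $\norm{e^h}^2$ is equivalent to $|\al|<1/2$, which completes the argument.

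I expect the main obstacle to be the step of the converse that identifies $h$ as a quadratic polynomial: the hypothesis only bounds $\re h$ from above, so one genuinely needs a growth/factorization input (Hadamard, or Borel--Carath\'eodory together with Cauchy estimates) rather than an elementary bound. Once $h$ is known to be quadratic, everything else is routine Gaussian integration, including the bookkeeping of the affine terms and the degenerate cases $\al=0$ and $|\al|=1/2$.
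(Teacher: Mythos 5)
The paper does not prove this lemma at all: it is imported verbatim from the reference \cite{KHI}, so there is no internal proof to compare against. Your argument is correct and self-contained. The forward direction via $\re(\al z^2)\le|\al|\,|z|^2$ and a convergent Gaussian is fine; in the converse, the essential input is exactly the one you flag, namely that the pointwise bound \eqref{point} only controls $\re h$ from above, so you need Hadamard's factorization (order $\le 2$, zero-free, hence $e^h=e^p$ with $\deg p\le 2$, and $h-p$ constant since it is continuous with values in $2\pi i\Z$) or, equivalently, Borel--Carath\'eodory plus Cauchy estimates, to conclude that $h$ is a quadratic polynomial. The sharpness analysis is also handled correctly: after translating and rotating so that the quadratic part of the exponent becomes $(2|\al|-1)\xi^2-(2|\al|+1)\eta^2$ plus affine terms, the $\xi$-integral diverges both for $|\al|>1/2$ and in the boundary case $|\al|=1/2$ (where the integrand degenerates to $e^{c\xi}$, whose integral over $\R$ is infinite for every $c$), so finiteness is exactly $|\al|<1/2$. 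This is the standard proof of the result and would serve as a complete substitute for the citation.
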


\begin{exa}
Let $\varphi(z)=4z$, $\psi(z)=e^{2z}$, and $f(z)=e^{z^2/4}$. Then by Lemma \ref{KHI-lem}, $f\in\calF^2$, while $E(\psi,\varphi)f(z)=e^{4z^2+2z}\notin\calF^2$, that is $f\notin\text{dom}(W_{\psi,\varphi})$.
\end{exa}

\begin{rem}\label{f-in-dom}
Note that a function $f\in\calF^2$ belongs to the domain $\text{dom}(W_{\psi,\varphi,\max})$ if and only if $\psi\cdot f\circ\varphi\in\calF^2$, or equivalently if and only if
$$
\int_{\C}|\psi(z)f(\varphi(z))|^2 e^{-|z|^2}\;dV(z)<\infty.
$$
\end{rem}

A characterization of weighted composition operators, which are bounded on $\calF^2$ was carried out in \cite{TL}, where the techniques of adjoint operators in Hilbert spaces play a key role in proving the necessity. In \cite{HK2}, the author and Khoi used a different approach (not using the adjoint operator) to characterize the boundedness of weighted composition operators acting on the more general Fock spaces. In particular, the following illustrative example was given.
\begin{prop}[\cite{HK2}]\label{exaa}
Let $\varphi(z)=Az+B$, $\psi(z)=Ce^{Dz}$, where $A,B,C,D$ are complex constants. Then the operator $W_{\psi,\varphi}$ is bounded on $\calF^2$ if and only if
    \begin{enumerate}
    \item either $|A|<1$,
    \item or $|A|=1$, $D+A\overline{B}=0$.
    \end{enumerate}
\end{prop}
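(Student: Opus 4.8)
The plan is to reduce the boundedness of $W_{\psi,\varphi}$ to the boundedness on $\C$ of the single positive function
$$
m(z):=|\psi(z)|\,e^{(|\varphi(z)|^{2}-|z|^{2})/2},
$$
and then to settle that by a direct computation. Using $|Az+B|^{2}=|A|^{2}|z|^{2}+2\re(A\overline{B}z)+|B|^{2}$ one gets
$$
2\log m(z)=2\log|C|+|B|^{2}+(|A|^{2}-1)|z|^{2}+2\re\!\big((D+A\overline{B})z\big),\qquad z\in\C .
$$
Reading this off: if $|A|<1$ the term $(|A|^{2}-1)|z|^{2}$ forces $m$ to be bounded; if $|A|=1$ that term drops out and $\log m$ is bounded precisely when the remaining linear term vanishes, i.e. when $D+A\overline{B}=0$; and if $|A|>1$, or $|A|=1$ with $D+A\overline{B}\neq0$, then $\sup_{\C}m=\infty$. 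Thus the proposition is equivalent to the assertion that \emph{$W_{\psi,\varphi}$ is bounded on $\calF^{2}$ if and only if $m$ is bounded on $\C$}, and I would prove the two implications separately.

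\emph{Necessity.} Suppose $W_{\psi,\varphi}$ is bounded on $\calF^{2}$ (hence everywhere defined). Applying the pointwise estimate \eqref{point} to the element $W_{\psi,\varphi}g$ gives, for every $g\in\calF^{2}$ and every $z\in\C$,
$$
|\psi(z)|\,|g(\varphi(z))|=\big|(W_{\psi,\varphi}g)(z)\big|\le\|W_{\psi,\varphi}g\|\,e^{|z|^{2}/2}\le\|W_{\psi,\varphi}\|\,\|g\|\,e^{|z|^{2}/2}.
$$
Specializing to the kernel function $g=K_{\varphi(z)}$, for which $g(\varphi(z))=e^{|\varphi(z)|^{2}}$ and $\|g\|=e^{|\varphi(z)|^{2}/2}$, this collapses to $m(z)\le\|W_{\psi,\varphi}\|$ for all $z$. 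Hence $m$ is bounded, and by the first paragraph one of the alternatives $(1)$, $(2)$ holds.

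\emph{Sufficiency.} If $|A|<1$, then by \eqref{point} again,
$$
\|W_{\psi,\varphi}f\|^{2}=\frac{1}{\pi}\int_{\C}|\psi(z)|^{2}\,|f(\varphi(z))|^{2}\,e^{-|z|^{2}}\,dV(z)\le\|f\|^{2}\cdot\frac{1}{\pi}\int_{\C}m(z)^{2}\,dV(z),
$$
and by the expansion above $m(z)^{2}$ is a fixed constant times $e^{-(1-|A|^{2})|z|^{2}+2\re((D+A\overline{B})z)}$, whose integral over $\C$ converges (complete the square; $1-|A|^{2}>0$); hence $W_{\psi,\varphi}$ is bounded. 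If $|A|=1$ and $D+A\overline{B}=0$, the majorization above is useless, since then $m$ is constant and the last integral diverges; instead I would change variables $u=Az+B$ in the integral defining $\|W_{\psi,\varphi}f\|^{2}$. As $|A|=1$ this substitution is measure preserving, and the relation $D+A\overline{B}=0$ is exactly what reduces the leftover exponential in $u$ to a constant, leaving
$$
\|W_{\psi,\varphi}f\|^{2}=|C|^{2}e^{|B|^{2}}\cdot\frac{1}{\pi}\int_{\C}|f(u)|^{2}e^{-|u|^{2}}\,dV(u)=|C|^{2}e^{|B|^{2}}\,\|f\|^{2},
$$
so $W_{\psi,\varphi}$ is $|C|\,e^{|B|^{2}/2}$ times an isometry; in particular it is bounded.

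I expect the one genuinely delicate point to be this last case $|A|=1$: the crude pointwise majorization that disposes of $|A|<1$ breaks down, and one must exploit the affine structure through the change of variables --- equivalently, recognize $W_{\psi,\varphi}$ (for $D+A\overline{B}=0$) as a scalar multiple of the composition of the unitary rotation $f\mapsto f(A\,\cdot)$ with a unitary translation-plus-Gaussian-weight operator on $\calF^{2}$. The remaining steps are routine manipulations of the Gaussian weight together with \eqref{point}.
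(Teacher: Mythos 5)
Your proof is correct and complete. The paper itself states Proposition~\ref{exaa} without proof, quoting it from \cite{HK2}, so there is no in-text argument to compare against; your route --- reducing everything to boundedness of $m(z)=|\psi(z)|e^{(|\varphi(z)|^2-|z|^2)/2}$, proving necessity by testing on the kernel functions $K_{\varphi(z)}$ (equivalently, by the bound $\|W_{\psi,\varphi}^*K_z\|\le\|W_{\psi,\varphi}\|\,\|K_z\|$), and proving sufficiency via the Gaussian majorization when $|A|<1$ and the measure-preserving substitution $u=Az+B$ when $|A|=1$, $D+A\overline{B}=0$ --- is the standard one, and the algebra checks out (in the isometry case the exponent indeed collapses to $-|u|^2+|B|^2$). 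The only pedantic caveat is that your identification of when $m$ is bounded tacitly assumes $C\ne0$ (you take $\log|C|$); if $C=0$ the operator is trivially bounded while neither (1) nor (2) need hold, but this degenerate case is excluded by the paper's standing assumption $\psi\not\equiv0$.
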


As mentioned in the Introduction, the authors characterized in \cite{HK1} all bounded weighted composition operators, which are $\calC_{a,b,c}$-symmetric on $\calF^2$.
\begin{prop}[\cite{HK1}]\label{cri-C-sym}
Let $\calC_{a,b,c}$ be a weighted composition conjugation, and $W_{\psi,\varphi}$ a bounded weighted composition operator induced by two entire functions $\psi,\varphi$. Then $W_{\psi, \varphi}$ is $\calC_{a,b,c}$-symmetric if and only if the following conditions hold:
\begin{itemize}
\item[(i)] $\varphi(z)=Az+B$, $\psi(z) = Ce^{Dz}$, with $C\ne 0,\ D=aB-bA+b.$
\item[(ii)] Either $|A| < 1$, or $|A|=1$, $D+A\overline{B}=0$.
\end{itemize}
\end{prop}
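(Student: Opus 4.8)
The plan is to first note that, $W_{\psi,\varphi}$ being bounded and everywhere defined on $\calF^2$, the relation $W_{\psi,\varphi}\preceq\calC_{a,b,c}W_{\psi,\varphi}^{*}\calC_{a,b,c}$ of Definition~\ref{defcso-new} forces the equality $W_{\psi,\varphi}=\calC_{a,b,c}W_{\psi,\varphi}^{*}\calC_{a,b,c}$, so $\calC_{a,b,c}$-symmetry is equivalent to this operator identity. Both sides are bounded linear operators and the closed span of the reproducing kernels $\{K_w:w\in\C\}$ is all of $\calF^2$ by \eqref{point}, hence I would reduce everything to the single family $W_{\psi,\varphi}K_w=\calC_{a,b,c}W_{\psi,\varphi}^{*}\calC_{a,b,c}K_w$, $w\in\C$. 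The two ingredients I would use are $W_{\psi,\varphi}^{*}K_w=\overline{\psi(w)}\,K_{\varphi(w)}$ (read off from $\langle W_{\psi,\varphi}f,K_w\rangle=\psi(w)f(\varphi(w))$) and $\calC_{a,b,c}K_w=c\,e^{bw}\,K_{\overline{b}+\overline{a}\,\overline{w}}$ (a direct substitution into \eqref{wcc-jormulas}). Unwinding the threefold composition, the kernel identity becomes the scalar functional equation
\[
\psi(z)\,e^{\overline{w}\varphi(z)}=|c|^{2}\,e^{\overline{b}\,\overline{w}}\,\psi\!\left(\overline{b}+\overline{a}\,\overline{w}\right)e^{\,b\varphi(\overline{b}+\overline{a}\,\overline{w})}\,e^{\,(b+a\varphi(\overline{b}+\overline{a}\,\overline{w}))\,z},
\]
required for all $z,w\in\C$, where $\overline{w}$ may legitimately be treated as a free complex variable since, for fixed $z$, each side is the exponential of a polynomial in $(w,\overline{w})$.

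For the necessity I would first set $w=0$, which at once gives $\psi(z)=Ce^{Dz}$ with $C=|c|^{2}\psi(\overline{b})e^{b\varphi(\overline{b})}$ and $D=b+a\varphi(\overline{b})$; here $C\ne0$ because $\psi\not\equiv0$. Substituting this back and cancelling $C$ leaves $e^{Dz+\overline{w}\varphi(z)}=\lambda(w)\,e^{(b+a\varphi(\overline{b}+\overline{a}\,\overline{w}))z}$ for a scalar $\lambda(w)$; comparing the two sides as entire functions of $z$ forces $Dz+\overline{w}\varphi(z)-(b+a\varphi(\overline{b}+\overline{a}\,\overline{w}))z$ to be independent of $z$, and solving that relation for $\varphi(z)$ at a fixed $w$ with $\overline{w}\ne0$ shows $\varphi$ is affine, $\varphi(z)=Az+B$. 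Then $D=b+aA\overline{b}+aB$, and conjugating the identity $\overline{a}b+\overline{b}=0$ from \eqref{abc-cond} gives $a\overline{b}=-b$, whence $D=aB-bA+b$; together with $C\ne0$ this is exactly (i). Finally (ii) is free: under (i) the operator is of the precise shape treated in Proposition~\ref{exaa}, and the standing hypothesis that $W_{\psi,\varphi}$ is bounded is precisely the dichotomy stated in (ii).

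For the sufficiency I would run the computation in reverse. Assuming (i)--(ii), Proposition~\ref{exaa} yields boundedness, so only the kernel identities remain. A direct computation with $\psi(z)=Ce^{Dz}$, $\varphi(z)=Az+B$ gives $W_{\psi,\varphi}K_w=Ce^{B\overline{w}}K_{\overline{D}+\overline{A}w}$, while unwinding $\calC_{a,b,c}W_{\psi,\varphi}^{*}\calC_{a,b,c}K_w$ with the two formulas above produces a scalar multiple of $K_{\overline{b}-\overline{A}\,\overline{b}+\overline{A}w+\overline{B}\,\overline{a}}$. Matching the kernel indices reduces, after conjugation, to $D=aB-bA+b$, which holds by (i) and $a\overline{b}=-b$; matching the two scalar prefactors reduces, after using $a\overline{b}=-b$, $|a|=1$ and $b\overline{b}=|b|^{2}$, to the single relation $|c|^{2}e^{|b|^{2}}=1$, which is the third condition in \eqref{abc-cond}. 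Thus $W_{\psi,\varphi}K_w=\calC_{a,b,c}W_{\psi,\varphi}^{*}\calC_{a,b,c}K_w$ for every $w$, hence $W_{\psi,\varphi}=\calC_{a,b,c}W_{\psi,\varphi}^{*}\calC_{a,b,c}$ and $W_{\psi,\varphi}$ is $\calC_{a,b,c}$-symmetric.

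The step I expect to be the real obstacle is the necessity direction: one must rigorously justify treating $\overline{w}$ as a free variable and then squeeze the exact shapes of \emph{both} symbols --- $\psi$ exponential, $\varphi$ affine --- out of a single functional equation, all while carefully tracking the anti-linearity of $\calC_{a,b,c}$ at each of its two applications. Once those forms are in hand, the remaining work is routine algebra with \eqref{abc-cond} together with the known boundedness criterion of Proposition~\ref{exaa}.
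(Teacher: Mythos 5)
This proposition is imported from \cite{HK1}; the paper states it without proof, so there is no internal argument to compare against. Your proposal is correct and is essentially the same kernel-function technique the paper itself uses for the unbounded analogue (Proposition~\ref{c-sym-thm-nece} extracts the affine/exponential forms from the identity $\calC_{a,b,c}W^{*}K_z=W\calC_{a,b,c}K_z$ tested at reproducing kernels, and Theorem~\ref{<->cs} handles sufficiency): your formula $\calC_{a,b,c}K_w=ce^{bw}K_{\overline{b}+\overline{a}\,\overline{w}}$, the reduction $D=b+aA\overline{b}+aB=aB-bA+b$ via $a\overline{b}=-b$, and the collapse of the scalar prefactor to $|c|^{2}e^{|b|^{2}}=1$ all check out, and condition (ii) is indeed just the standing boundedness hypothesis read through Proposition~\ref{exaa}.
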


It is worth to mention a standard technique when one characterizes the complex symmetry of bounded operators. Recall that a bounded operator which is complex symmetric on a dense subset, is necessarily complex symmetric on the whole Hilbert space. Thus, Proposition \ref{exaa} (a criteria for boundedness) plays an indispensable role in proving the sufficient condition of Proposition \ref{cri-C-sym}.

\section{Some initial properties}
This section contains several auxiliary results which will be used to prove the main results. Some of these results may have an intrinsic value.

\subsection{Reproducing kernels}
The first observation is concerned with the action of an unbounded weighted composition operator on the kernel functions. It allows us to predict a form for eigenvalues of $W_{\psi,\varphi}^*$ when the symbol $\varphi$ has a fixed point.
\begin{lem}\label{W*Kz-prop}
Let $W_{\psi,\varphi}$ be a densely defined unbounded weighted composition operator induced by two entire functions $\psi,\varphi$. Then 
\begin{enumerate}
\item For every $z\in\C$, we always have $K_z\in\text{dom}(W_{\psi,\varphi}^*)$, and
\begin{equation*}
W_{\psi,\varphi}^*K_z=\overline{\psi(z)}K_{\varphi(z)}.
\end{equation*}
\item In particular, if $\varphi(z)=Az+B$, where $A,B$ are constants, then for every $z\in\C$, $m\in\N$, $K_z^{[m]}\in\text{dom}(W_{\psi,\varphi}^*)$, and
\begin{equation*}
W_{\psi,\varphi}^*K_z^{[m]}=\sum_{j=0}^m\binom{m}{j}\overline{\psi^{(m-j)}(z)A^{j}}K_{Az+B}^{[j]}.
\end{equation*}
\end{enumerate}
\end{lem}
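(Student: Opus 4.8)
The plan is to check both statements straight from the definition of the adjoint of a densely defined operator, the only analytic input being the reproducing properties of the kernels $K_z^{[m]}$.

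For part (1): since $W_{\psi,\varphi}$ is densely defined, a vector $g$ lies in $\text{dom}(W_{\psi,\varphi}^*)$ exactly when $f\mapsto\langle W_{\psi,\varphi}f,g\rangle$ is a bounded functional on $\text{dom}(W_{\psi,\varphi})$, and then $W_{\psi,\varphi}^*g$ is its Riesz representative. So I would fix $z\in\C$ and, for arbitrary $f\in\text{dom}(W_{\psi,\varphi})$, use that $W_{\psi,\varphi}f=\psi\cdot f\circ\varphi$ is an element of $\calF^2$ to write
$$\langle W_{\psi,\varphi}f,K_z\rangle=(W_{\psi,\varphi}f)(z)=\psi(z)f(\varphi(z))=\psi(z)\langle f,K_{\varphi(z)}\rangle=\langle f,\overline{\psi(z)}K_{\varphi(z)}\rangle,$$
the first equality expressing the reproducing property of $\calF^2$, cf.~\eqref{point}, applied to $W_{\psi,\varphi}f$. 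Since $\overline{\psi(z)}K_{\varphi(z)}$ is a fixed vector of $\calF^2$, the functional is bounded, so $K_z\in\text{dom}(W_{\psi,\varphi}^*)$ with $W_{\psi,\varphi}^*K_z=\overline{\psi(z)}K_{\varphi(z)}$.

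For part (2) I first record the higher-order reproducing identity $\langle f,K_z^{[m]}\rangle=f^{(m)}(z)$, valid for all $f\in\calF^2$, which one checks on monomials and extends by density (or cites from \cite{KZ}). With $\varphi(z)=Az+B$ and $z,m$ fixed, the function $W_{\psi,\varphi}f=\psi\cdot(f\circ\varphi)$ is entire, and the Leibniz rule together with $\frac{d^j}{dz^j}f(Az+B)=A^jf^{(j)}(Az+B)$ gives
$$(W_{\psi,\varphi}f)^{(m)}(z)=\sum_{j=0}^m\binom{m}{j}\psi^{(m-j)}(z)A^jf^{(j)}(Az+B).$$
Translating via $(W_{\psi,\varphi}f)^{(m)}(z)=\langle W_{\psi,\varphi}f,K_z^{[m]}\rangle$ and $f^{(j)}(Az+B)=\langle f,K_{Az+B}^{[j]}\rangle$ turns this into
$$\langle W_{\psi,\varphi}f,K_z^{[m]}\rangle=\Big\langle f,\ \sum_{j=0}^m\binom{m}{j}\overline{\psi^{(m-j)}(z)A^j}\,K_{Az+B}^{[j]}\Big\rangle,$$
and, as before, boundedness of the left side in $f$ yields $K_z^{[m]}\in\text{dom}(W_{\psi,\varphi}^*)$ and the stated formula.

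This lemma is essentially a computation, so I do not expect a real obstacle; the only things to be careful about are that ``$K_z^{[m]}\in\text{dom}(W_{\psi,\varphi}^*)$'' is a genuine claim to be verified via boundedness of a functional rather than assumed, that applying the reproducing property to $W_{\psi,\varphi}f$ is legitimate precisely because $f\in\text{dom}(W_{\psi,\varphi})$ forces $\psi\cdot f\circ\varphi\in\calF^2$, and that the higher-order reproducing identity $\langle f,K_z^{[m]}\rangle=f^{(m)}(z)$ is stated cleanly before it is used.
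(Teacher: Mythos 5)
Your proof is correct and follows essentially the same route as the paper: compute $\langle W_{\psi,\varphi}f,K_z\rangle$ (resp. $\langle W_{\psi,\varphi}f,K_z^{[m]}\rangle$) via the reproducing property applied to $W_{\psi,\varphi}f\in\calF^2$, use the Leibniz rule together with $(f\circ\varphi)^{(\ell)}(z)=A^{\ell}f^{(\ell)}(Az+B)$ for part (2), and read off membership in $\text{dom}(W_{\psi,\varphi}^*)$ from the fact that the resulting functional is represented by a fixed vector. Your version is slightly more explicit than the paper's about why this representation forces $K_z^{[m]}\in\text{dom}(W_{\psi,\varphi}^*)$, but the argument is the same.
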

\begin{proof}
(1) For every $f\in\text{dom}(W_{\psi,\varphi})$, we have
\begin{eqnarray*}
\langle W_{\psi,\varphi}f,K_z\rangle &=& W_{\psi,\varphi}f(z)=\psi(z)\langle f,K_{\varphi(z)}\rangle=\langle f,\overline{\psi(z)}K_{\varphi(z)}\rangle,
\end{eqnarray*}
which gives conclusion (1).

(2) Now suppose that $\varphi(z)=Az+B$. Let $f\in\text{dom}(W_{\psi,\varphi})$. By induction, we can show that
$$
(f\circ\varphi)^{(\ell)}(z)=A^{\ell}f^{(\ell)}(Az+B),\quad\forall\ell\in\N,\forall z\in\C,
$$
and hence,
\begin{eqnarray*}
\langle W_{\psi,\varphi}f,K_z^{[m]}\rangle &=& (W_{\psi,\varphi}f)^{(m)}(z)=\sum_{j=0}^m\binom{m}{j}\psi^{(m-j)}(z)A^{j}f^{(j)}(Az+B).
\end{eqnarray*}
Since $f^{(j)}(Az+B)=\langle f,K_{Az+B}^{[j]}\rangle$, we get
\begin{eqnarray*}
\langle W_{\psi,\varphi}f,K_z^{[m]}\rangle 
&=&\langle f,\sum_{j=0}^m\binom{m}{j}\overline{\psi^{(m-j)}(z)A^{j}}K_{Az+B}^{[j]}\rangle,
\end{eqnarray*}
which gives conclusion (2).
\end{proof}

The next result shows a structural description of the kernel of the operator $W_{\psi,\varphi}$ and hence the range of $W_{\psi,\varphi}^*$.

\begin{prop}\label{W-dense}
Let $W_{\psi,\varphi}$ be a densely defined unbounded weighted composition operator induced by two entire functions $\psi,\varphi$. If the function $\psi$ is nowhere vanished and $\varphi$ is non-constant, then
$$
\ker(W_{\psi,\varphi})=\{0\},\quad\overline{\im(W_{\psi,\varphi}^*)}=\calF^2(\C).
$$
\end{prop}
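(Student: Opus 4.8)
The plan is to prove the two assertions separately; both reduce, via the pointwise estimate \eqref{point}, to the identity theorem for entire functions, combined with the fact that the image $\varphi(\C)$ of a non-constant entire function is a non-empty open subset of $\C$ (open mapping theorem). Note first that \eqref{point} implies that the zero element of $\calF^2$ is the zero entire function, so that a vanishing identity in $\calF^2$ is a vanishing identity of holomorphic functions on $\C$.

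For the kernel, I would start from $f\in\ker(W_{\psi,\varphi})$, so that $\psi\cdot(f\circ\varphi)=0$ as an element of $\calF^2$, hence $\psi\cdot(f\circ\varphi)\equiv0$ on $\C$. Since $\psi$ is nowhere vanishing, this forces $f\circ\varphi\equiv0$ on $\C$, i.e. $f$ vanishes on $\varphi(\C)$. As $\varphi$ is non-constant, $\varphi(\C)$ contains a non-empty open set, so $f\equiv0$ by the identity theorem. This gives $\ker(W_{\psi,\varphi})=\{0\}$.

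For the range, I would invoke Lemma \ref{W*Kz-prop}(1): for every $z\in\C$ we have $K_z\in\text{dom}(W_{\psi,\varphi}^*)$ and $W_{\psi,\varphi}^*K_z=\overline{\psi(z)}K_{\varphi(z)}$. Since $\psi(z)\ne0$, it follows that $K_w\in\im(W_{\psi,\varphi}^*)$ for every $w\in\varphi(\C)$. Now take any $g\in\calF^2$ with $g\perp\im(W_{\psi,\varphi}^*)$; then $g(w)=\langle g,K_w\rangle=0$ for every $w\in\varphi(\C)$, and, $\varphi(\C)$ containing a non-empty open set, the identity theorem again forces $g\equiv0$. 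Since $\im(W_{\psi,\varphi}^*)$ is a linear subspace, this yields $\im(W_{\psi,\varphi}^*)^\perp=\{0\}$, hence $\overline{\im(W_{\psi,\varphi}^*)}=\calF^2(\C)$.

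I do not expect a genuine obstacle here; the only points requiring care are passing from ``zero in $\calF^2$'' to ``zero as a function'' (handled by \eqref{point}) and justifying that $\varphi(\C)$ is large enough to apply the identity theorem. For the latter one may cite the open mapping theorem, or argue directly that the connected, more-than-one-point set $\varphi(\C)$ cannot be contained in the discrete zero set of a nontrivial entire function.
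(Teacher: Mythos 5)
Your proof is correct, and the first half (the kernel computation) is essentially identical to the paper's: pass from the vanishing identity in $\calF^2$ to a pointwise identity, divide out the nowhere-vanishing $\psi$, and conclude $f\equiv 0$ from the identity theorem because $\varphi(\C)$ is more than a point. The paper states this more tersely but the content is the same.

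For the density of $\im(W_{\psi,\varphi}^*)$ you take a genuinely different route. The paper simply invokes the orthogonal decomposition $\calF^2=\overline{\im(W_{\psi,\varphi}^*)}\oplus\ker(W_{\psi,\varphi})$ and cancels the kernel; this is a one-line argument, but the decomposition in that form is really a statement about closed (or at least closable) densely defined operators, since in general $(\im T^*)^\perp=\ker(\overline{T})$ rather than $\ker(T)$. (This causes no harm here: the kernel argument applies verbatim to the closure, which is a restriction of the closed maximal operator of Proposition \ref{W-closed}.) You instead argue concretely: by Lemma \ref{W*Kz-prop}(1) and $\psi(z)\ne 0$, every kernel function $K_w$ with $w\in\varphi(\C)$ lies in $\im(W_{\psi,\varphi}^*)$, so any $g$ orthogonal to the range vanishes on the open set $\varphi(\C)$ and hence is identically zero. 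This buys you independence from any closedness hypothesis and from the abstract range--kernel duality, at the cost of a second appeal to the open mapping/identity theorems; both arguments are valid, and yours is arguably the more self-contained of the two.
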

\begin{proof}
Let $f\in\ker(W_{\psi,\varphi})$. For every $z\in\C$, we have $\psi(z)f(\varphi(z))=0$, which gives $f(\varphi(z))=0$, and hence, $f\equiv 0$. Thus, $\ker(W_{\psi,\varphi})=\{0\}$.

Furthermore,
$$
\calF^2=\overline{\im(W_{\psi,\varphi}^*)}\oplus\ker(W_{\psi,\varphi})=\overline{\im(W_{\psi,\varphi}^*)}.
$$
\end{proof}

\begin{thm}\label{varphi-cohypo}
Let $W_{\psi,\varphi}$ be a densely defined unbounded weighted composition operator, induced by two entire functions $\psi\not\equiv 0$ and $\varphi\not\equiv\text{const}$. Suppose that there exists an involutive mapping $S:\calF^2\to\calF^2$, such that
\begin{equation}\label{SKz-dom}
\text{dom}(W_{\psi,\varphi}^*\big)\subseteq\text{dom}(W_{\psi,\varphi}S)
\end{equation}
and
\begin{equation}\label{WS<W*}
\|W_{\psi,\varphi}Sf\| \leq \|W_{\psi,\varphi}^*f\|,\ \forall f\in\text{dom}(W_{\psi,\varphi}^*).
\end{equation}
The following conclusions hold.
\begin{enumerate}
\item The function $\psi$ is never vanished. Furthermore, if $\psi\in\calF^2$, then it takes the form $\psi(z)=\psi(0)e^{C z^2+D z}$, where $C,D$ are constants with $|C|<1/2$ and $\psi(0)\ne 0$.
\item The function $\varphi$ takes the form $\varphi(z)=Az+B$, where $A,B$ are complex constants, with $A\ne 0$.
\item If $A=1$ and $S$ is the identity operator, then $\psi(z)=\psi(0)e^{Dz}$, where $D$ is a complex constant, and $\psi(0)\ne 0$, $|B|\geq |D|$.
\end{enumerate}
\end{thm}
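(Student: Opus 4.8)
The plan is to extract information about $\psi$ and $\varphi$ from the two hypotheses \eqref{SKz-dom} and \eqref{WS<W*} by testing them on the kernel functions $K_z$, using Lemma \ref{W*Kz-prop}(1) to compute the right-hand side of \eqref{WS<W*} explicitly. Since $K_z\in\text{dom}(W_{\psi,\varphi}^*)$ for every $z\in\C$ by that lemma, hypothesis \eqref{SKz-dom} gives $K_z\in\text{dom}(W_{\psi,\varphi}S)$, i.e.\ $S K_z\in\text{dom}(W_{\psi,\varphi})$, and hypothesis \eqref{WS<W*} yields
$$
\|W_{\psi,\varphi}S K_z\| \leq \|W_{\psi,\varphi}^* K_z\| = |\psi(z)|\,\|K_{\varphi(z)}\| = |\psi(z)|\,e^{|\varphi(z)|^2/2}.
$$

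For part (1): if $\psi(z_0)=0$ for some $z_0$, the displayed inequality forces $W_{\psi,\varphi}S K_{z_0}=0$; since $S$ is involutive (hence a bijection) and $\ker(W_{\psi,\varphi})=\{0\}$ would follow from Proposition \ref{W-dense} once we know $\psi$ is nowhere zero — so instead I would argue directly that $W_{\psi,\varphi} g = 0$ means $\psi\cdot(g\circ\varphi)\equiv 0$, forcing $g\circ\varphi\equiv 0$ off the zero set of $\psi$ and hence $g\equiv 0$ by the identity theorem (here $\varphi$ non-constant is used), so $SK_{z_0}=0$, contradicting that $S$ is an isometric-type bijection (or at least injective as an involution) and $K_{z_0}\neq 0$. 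Thus $\psi$ is nowhere vanishing. If moreover $\psi\in\calF^2$, then $\psi$ is a zero-free entire function in the Fock space; writing $\psi=e^h$ with $h$ entire, Lemma \ref{KHI-lem} gives $h(z)=Cz^2+Dz+\gamma$ with $|C|<1/2$, and evaluating at $0$ identifies $e^\gamma=\psi(0)\neq 0$, giving $\psi(z)=\psi(0)e^{Cz^2+Dz}$.

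For part (2): here is where the main work lies. I would combine the growth bound $\|W_{\psi,\varphi}SK_z\|\le |\psi(z)|e^{|\varphi(z)|^2/2}$ with a lower bound on the left side. The natural move is to pick a convenient $S$-independent lower estimate: since $W_{\psi,\varphi}SK_z\in\calF^2$, by \eqref{point} its value at any point $u$ satisfies $|(W_{\psi,\varphi}SK_z)(u)|\le \|W_{\psi,\varphi}SK_z\|e^{|u|^2/2}$, and $(W_{\psi,\varphi}SK_z)(u)=\psi(u)(SK_z)(\varphi(u))$. Choosing $u$ cleverly — or integrating — should let me control how fast $SK_z$, and thence $\varphi$, can grow. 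A cleaner route: because $S$ is merely involutive (not assumed bounded in this generality), I expect the intended argument is that \eqref{SKz-dom} together with $\text{dom}(W_{\psi,\varphi}^*)$ being dense forces $W_{\psi,\varphi}S$ to be densely defined and the inequality \eqref{WS<W*} forces $(W_{\psi,\varphi}S)^*$ to extend $W_{\psi,\varphi}$... then one analyzes the composition symbol of the operator $W_{\psi,\varphi}SW_{\psi,\varphi}^{-1}$-type expression. Concretely, I would show that $W_{\psi,\varphi}^*W_{\psi,\varphi}$-type domination forces the composition symbol $\varphi\circ\varphi$ (arising from $W_{\psi,\varphi}$ paired with its adjoint's composition data $z\mapsto\varphi(z)$ in the kernel formula) to be affine, and an entire self-map $\varphi$ of $\C$ with $\varphi\circ$(affine) affine must itself be affine; together with $\varphi$ non-constant this gives $\varphi(z)=Az+B$ with $A\neq 0$. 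The main obstacle, and the step I would spend the most care on, is making this rigorous without assuming $S$ bounded: I anticipate needing to exploit that the kernel functions span a dense set and that \eqref{WS<W*} is an inequality of the form "$W_{\psi,\varphi}S$ is dominated by $W_{\psi,\varphi}^*$ on a core", which by a closed-graph/extension argument ($F\preceq G$ closed) pins down $\varphi$ up to the affine normalization.

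For part (3): assume $A=1$ (so $\varphi(z)=z+B$) and $S=\mathrm{Id}$. Then \eqref{WS<W*} reads $\|W_{\psi,\varphi}f\|\le\|W_{\psi,\varphi}^*f\|$ for all $f\in\text{dom}(W_{\psi,\varphi}^*)$, i.e.\ $W_{\psi,\varphi}$ is cohyponormal (which is the setting of the theorem's title). Apply this with $f=K_z$: $\|W_{\psi,\varphi}K_z\|\le|\psi(z)|e^{|z+B|^2/2}$. Compute the left side directly — $W_{\psi,\varphi}K_z(u)=\psi(u)e^{\overline{z}(u+B)}=\psi(u)e^{\overline z B}e^{\overline z u}$, so $W_{\psi,\varphi}K_z=e^{\overline z B}\,(\psi\cdot K_z)$ up to the obvious interpretation, and $\|\psi\cdot K_z\|^2=\langle\psi K_z,\psi K_z\rangle$; using $\psi\in\calF^2$ with $\psi(z)=\psi(0)e^{Cz^2+Dz}$ from part (1), a Gaussian integral evaluates this norm, and the inequality, as a function of $z\in\C$, becomes an elementary inequality between two expressions of the form (constant)$\cdot e^{(\text{quadratic in }z,\bar z)}$. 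Matching the quadratic exponents forces $C=0$, hence $\psi(z)=\psi(0)e^{Dz}$, and matching the linear/constant terms yields the constraint $|B|\ge|D|$. I expect part (3) to be mostly a computation once parts (1)–(2) are in hand; the only mild care needed is the convergence of the Gaussian integrals, which is guaranteed by $|C|<1/2$ from part (1).
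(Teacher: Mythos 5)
Parts (1) and (3) of your outline are essentially workable. Part (1) follows the paper's own argument (test the hypotheses on $K_{z_0}$, use injectivity of $W_{\psi,\varphi}$ and of the involution $S$, then Lemma \ref{KHI-lem}). For part (3), your plan to evaluate $\|W_{\psi,\varphi}K_z\|^2=e^{2\re(\overline{z}B)}\|\psi K_z\|^2$ as an explicit Gaussian integral (convergent precisely because $|C|<1/2$) and compare the quadratic exponents does go through and is a mildly different route from the paper, which avoids the integral by bounding $\|W_{\psi,\varphi}K_z\|$ from below by $|W_{\psi,\varphi}K_z(u)|e^{-|u|^2/2}$ and exploiting the resulting inequality in the two free variables $u,z$. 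Two small repairs are needed there: you must first justify that $\psi\in\calF^2$ before invoking the form $\psi(0)e^{Cz^2+Dz}$ from part (1) (it follows from $1=K_0\in\text{dom}(W_{\psi,\varphi}^*)\subseteq\text{dom}(W_{\psi,\varphi})$ when $S$ is the identity, so $\psi=W_{\psi,\varphi}1\in\calF^2$), and you should actually carry out the exponent comparison rather than assert it.

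The genuine gap is part (2). You propose two strategies --- a growth estimate on $SK_z\circ\varphi$, and an adjoint/extension argument involving ``$W_{\psi,\varphi}SW_{\psi,\varphi}^{-1}$'' and a symbol ``$\varphi\circ\varphi$'' --- but neither is carried out, and you yourself flag the key step as the ``main obstacle.'' The first cannot work as stated because $S$ is an arbitrary involution, so nothing is known about $SK_z$ and no growth information about $\varphi$ can be extracted from it; the second does not correspond to any operator identity actually available from \eqref{SKz-dom}--\eqref{WS<W*}. The missing idea is much simpler: it suffices to prove that $\varphi$ is \emph{injective}, since an injective entire function is automatically affine of degree one (an essential singularity or a polynomial of degree $\geq 2$ at infinity destroys injectivity; see \cite[Exercise 14, Chapter 3]{SS}). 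If $\varphi(z_1)=\varphi(z_2)$, then by Lemma \ref{W*Kz-prop}(1) the function $f=\overline{\psi(z_2)}K_{z_1}-\overline{\psi(z_1)}K_{z_2}$ lies in $\ker W_{\psi,\varphi}^*$, so \eqref{WS<W*} forces $W_{\psi,\varphi}Sf=0$; since $\psi$ is nowhere zero by part (1) and $\varphi$ is non-constant, $\ker W_{\psi,\varphi}=\{0\}$ by Proposition \ref{W-dense}, hence $Sf=0$ and then $f=0$ because an involution is injective. As $\psi(z_1),\psi(z_2)\neq 0$ and distinct kernel functions are linearly independent, this gives $z_1=z_2$. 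That is the step your proposal is missing.
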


\begin{proof}
(1) Assume in contrary that $\psi(z_0)=0$ for some $z_0\in\C$. Then there is a neighbourhood $V$ of $z_0$ such that $\psi(z)\ne0$ for every $z \in V\setminus\{z_0\}$. Lemma \ref{W*Kz-prop}(1) shows that $K_{z_0}\in\text{dom}(W_{\psi,\varphi}^*)$ and $W_{\psi,\varphi}^*K_{z_0} = \overline{\psi(z_0)}K_{\varphi(z_0)} = 0$.

By assumptions \eqref{SKz-dom}-\eqref{WS<W*}, we have $SK_{z_0}\in\text{dom}(W_{\psi,\varphi})$ and $W_{\psi,\varphi}SK_{z_0}=0$. Consequently, taking into account the structure of the operator $W_{\psi,\varphi}$, we have
$$
\psi(z)SK_{z_0}(\varphi(z))=W_{\psi, \varphi}SK_{z_0}(z)=0,\quad\forall z\in\C,
$$
which implies that $SK_{z_0}\circ\varphi\equiv 0$ on $V\setminus\{z_0\}$. Since $\varphi$ is a non-constant function, $SK_{z_0}\equiv 0$, and hence, $K_{z_0}\equiv 0$ (because $S$ is involutive). But it is impossible.

The rest part of this conclusion follows from Lemma \ref{KHI-lem}.

(2) By \cite[Exercise 14, Chapter 3]{SS}, it is enough to show that the function $\varphi$ is injective.

Suppose that $\varphi(z_1)=\varphi(z_2)$, for some $z_1,z_2\in\C$. Since $K_{z_1}$ and $K_{z_2}$ both belong to the domain $\text{dom}(W_{\psi,\varphi}^*)$, so do their linear combinations. Lemma \ref{W*Kz-prop}(1) gives
$$
W_{\psi,\varphi}^*\big(\overline{\psi(z_2)}K_{z_1}-\overline{\psi(z_1)}K_{z_2}\big) = \overline{\psi(z_1)\psi(z_2)}K_{\varphi(z_1)}-\overline{\psi(z_1)\psi(z_2)}K_{\varphi(z_2)}=\mathbf{0},
$$
which implies, again by assumption \eqref{WS<W*}, that $W_{\psi,\varphi}S(\overline{\psi(z_2)}K_{z_1}-\overline{\psi(z_1)}K_{z_2})=\mathbf{0}$. This means that $S(\overline{\psi(z_2)}K_{z_1}-\overline{\psi(z_1)}K_{z_2})\in\ker(W_{\psi,\varphi})$, and hence, by Proposition \ref{W-dense}, it must be a zero function. Since the operator $S$ is involutive, we get
$$
(\overline{\psi(z_2)}K_{z_1}-\overline{\psi(z_1)}K_{z_2})(u)=0,\ \forall u\in\C,
$$
which gives $z_1=z_2$.

(3) Now suppose that $\varphi(z)=z+B$ and $S$ is the identity operator. By Lemma \ref{W*Kz-prop}(1) and assumption \eqref{WS<W*}, we have
\begin{eqnarray}\label{WK<W*K}
|\psi(z)|\cdot\|K_{\varphi(z)}\|&=&\|W_{\psi,\varphi}^*K_z\|\geq \|W_{\psi,\varphi}K_z\|\nonumber\\
&\geq& |\langle W_{\psi,\varphi}K_z,K_u\rangle|\cdot\|K_u\|^{-1}\nonumber\\
&=&|W_{\psi,\varphi}K_z(u)|\cdot\|K_u\|^{-1}\nonumber\\
&=&|\psi(u)e^{\overline{z}\varphi(u)}|e^{-|u|^2/2}.
\end{eqnarray}
Since the function $\psi$ is nowhere vanished, we can rewrite the above as follows
\begin{equation}\label{A=1}
\left|\dfrac{\psi(u)}{\psi(z)}\right|e^{\re(\overline{z}\varphi(u))-|u|^2/2-|\varphi(z)|^2/2}\leq 1,\ \forall u,z\in\C.
\end{equation}
Note that by Lemma \ref{W*Kz-prop}(1) and assumption \eqref{SKz-dom}, we see
$$1=K_0\in\text{dom}(W_{\psi,\varphi}^*)\subseteq\text{dom}(W_{\psi,\varphi}S)=\text{dom}(W_{\psi,\varphi}),$$
and so, $\psi=W_{\psi,\varphi}1\in\calF^2$. Using conclusion (1), this function takes the form $\psi(z)=\psi(0)e^{Cz^2+Dz}$ with $|C|<1/2$, and hence,
$$
\left|\dfrac{\psi(u)}{\psi(z)}\right|=e^{\re[(u-z)(C(z+u)+D)]}.
$$
Since $\varphi(z)=z+B$, we have $2\re(\overline{z}\varphi(u))-|u|^2-|\varphi(z)|^2=-|z-u|^2-|B|^2$.

Substituting the above identities back into \eqref{A=1}, we get
\begin{equation}\label{4C}
|z-u|^2-2\re[(u-z)(C(z+u)+D)]+|B|^2\geq 0,\ \forall u,z\in\C.
\end{equation}

Assume in contrary that $C\ne0$. For
$$
u=\frac{1}{2}\left(\frac{2|B|+1-D}{C}+2|B|+1\right)\quad\hbox{and}\quad z=\frac{1}{2}\left(\frac{2|B|+1-D}{C}-2|B|-1\right),
$$
we have
$$
|z-u|^2-2\re[(u-z)(C(z+u)+D)]+|B|^2=(-|B|-1)(3|B|+1)<0,
$$
which contradicts \eqref{4C}. 

Thus, we must have $C=0$, and hence, inequality \eqref{4C} is reduced to
$$
|z-u|^2-2\re[(u-z)D]+|B|^2\geq 0,\ \forall u,z\in\C.
$$
In particular with $u-z=\overline{D}$, we obtain $|B|\geq |D|$, and the proof of the theorem is complete.
\end{proof}

\subsection{Dense domain and closed graph}
The following result may be well-known, but we give a proof, for a completeness of exposition.
\begin{prop}\label{W-closed}
Every maximal weighted composition operator is closed on Fock space $\calF^2$.
\end{prop}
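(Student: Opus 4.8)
The plan is to show that the maximal weighted composition operator $W_{\psi,\varphi,\max}$ has a closed graph by exploiting the fact that norm convergence in $\calF^2$ implies pointwise convergence, as recorded in \eqref{point}. Suppose $(f_n)\subseteq\text{dom}(W_{\psi,\varphi,\max})$ with $f_n\to f$ in $\calF^2$ and $W_{\psi,\varphi,\max}f_n\to g$ in $\calF^2$; I must produce the conclusions $f\in\text{dom}(W_{\psi,\varphi,\max})$ and $W_{\psi,\varphi,\max}f=g$. The key point is that both hypotheses give pointwise information: by \eqref{point}, $f_n(w)\to f(w)$ for every $w\in\C$, and $(W_{\psi,\varphi,\max}f_n)(w)\to g(w)$ for every $w\in\C$.

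The main step is to evaluate $(W_{\psi,\varphi,\max}f_n)(z)=\psi(z)f_n(\varphi(z))$ and pass to the limit. Fix $z\in\C$. Since $f_n\to f$ in $\calF^2$, in particular $f_n(\varphi(z))\to f(\varphi(z))$ (apply \eqref{point} at the point $w=\varphi(z)$), hence $\psi(z)f_n(\varphi(z))\to\psi(z)f(\varphi(z))$. On the other hand, the same quantity $(W_{\psi,\varphi,\max}f_n)(z)$ converges to $g(z)$ because $W_{\psi,\varphi,\max}f_n\to g$ in $\calF^2$. By uniqueness of limits in $\C$, we conclude $\psi(z)f(\varphi(z))=g(z)$ for every $z\in\C$, i.e.\ $E(\psi,\varphi)f=g$ as entire functions. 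Since $g\in\calF^2$, this shows $E(\psi,\varphi)f\in\calF^2$, which by definition of the maximal domain means $f\in\text{dom}(W_{\psi,\varphi,\max})$, and moreover $W_{\psi,\varphi,\max}f=E(\psi,\varphi)f=g$. This is exactly the closed-graph condition, so $W_{\psi,\varphi,\max}$ is closed.

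I do not anticipate a serious obstacle here; the only mild subtlety is making sure the identity $\psi\cdot(f\circ\varphi)=g$ is being asserted as an identity between entire functions (so that membership in $\calF^2$ is meaningful), which follows because $\psi$, $f\circ\varphi$, and $g$ are all entire and they agree at every point of $\C$. One should also note at the outset that $\text{dom}(W_{\psi,\varphi,\max})$ is nonempty (it contains $0$) so the statement is not vacuous, and recall from Remark \ref{f-in-dom} the equivalent integral description of the maximal domain if one prefers to phrase the final membership that way. No appeal to boundedness, adjoints, or any structural hypotheses on $\psi,\varphi$ is needed — just the reproducing-kernel estimate \eqref{point} and the elementary observation that the action of the operator is pointwise multiplication composed with a fixed substitution.
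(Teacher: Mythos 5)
Your argument is correct and is essentially identical to the paper's own proof: both use the reproducing-kernel estimate \eqref{point} to convert norm convergence into pointwise convergence, evaluate $\psi(z)f_n(\varphi(z))$ at each $z$, and identify the limit with $g$ to verify the closed-graph condition. No discrepancies.
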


\begin{proof}
Let $W_{\psi,\varphi,\max}$ be the maximal weighted composition operator induced by two entire functions $\psi,\varphi$.

Furthermore, let $(f_n)$ be a sequence of functions in $\calF^2$ and $f,g\in\calF^2$, such that
$$
f_n\to f \quad\text{and}\quad W_{\psi,\varphi,\max}f_n\to g\quad\hbox{in $\calF^2$}.
$$
By \eqref{point}, we have
$$
f_n(z)\to f(z) \quad\text{and}\quad W_{\psi,\varphi,\max}f_n(z)\to g(z),\quad\forall z\in\C,
$$
and so,
$$
W_{\psi,\varphi,\max}f_n(z)=\psi(z)f_n(\varphi(z))\to\psi(z)f(\varphi(z)),\quad\forall z\in\C.
$$
Thus,
$$
\psi(z)f(\varphi(z))=g(z),\quad\forall z\in \C.
$$
Since $g\in\calF^2$, we conclude that $f\in\text{dom}(W_{\psi,\varphi,\max})$ and $W_{\psi,\varphi,\max}f=g$.
\end{proof}

The result below offers an alternate description of the maximal weighted composition operators.
\begin{prop}\label{Wmax=J}
Let $\calQ$ be the linear operator given by
$$
\text{dom}(\calQ)=\text{Span}(\{K_z:z\in\C\}),\quad\calQ K_z=\overline{\psi(z)}K_{\varphi(z)}.
$$
Then $W_{\psi,\varphi,\max}=\calQ^*$. Moreover, the operator $W_{\psi,\varphi,\max}$ is densely defined if and only if the operator $\calQ$ is closable.
\end{prop}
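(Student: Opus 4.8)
The plan is to compute the Hilbert-space adjoint $\calQ^*$ directly from its definition, recognize it as the maximal operator $W_{\psi,\varphi,\max}$, and then deduce the last sentence from the classical closability criterion.

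First I would note that $\calQ$ is densely defined: if $f\in\calF^2$ is orthogonal to every $K_z$, then $f(z)=\langle f,K_z\rangle=0$ for all $z\in\C$, so $f\equiv0$; hence $\text{dom}(\calQ)=\text{Span}(\{K_z:z\in\C\})$ is dense and $\calQ^*$ is a well-defined closed operator (a priori not densely defined). To identify it, recall $f\in\text{dom}(\calQ^*)$ iff there is $g\in\calF^2$ with $\langle \calQ h,f\rangle=\langle h,g\rangle$ for all $h\in\text{dom}(\calQ)$, and then $\calQ^*f=g$. Since both sides are linear in $h$ and $\calQ$ is linear, it suffices to test $h=K_z$ for $z\in\C$. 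Using $\calQ K_z=\overline{\psi(z)}K_{\varphi(z)}$ and the reproducing property, the left-hand side equals $\overline{\psi(z)}\,\overline{f(\varphi(z))}=\overline{\psi(z)f(\varphi(z))}$ while the right-hand side equals $\overline{g(z)}$; so the condition on $f$ becomes: there exists $g\in\calF^2$ with $g(z)=\psi(z)f(\varphi(z))=E(\psi,\varphi)f(z)$ for every $z\in\C$. By Remark \ref{f-in-dom} this is precisely the statement $f\in\text{dom}(W_{\psi,\varphi,\max})$, and in that case $\calQ^*f=g=W_{\psi,\varphi,\max}f$. Hence $W_{\psi,\varphi,\max}=\calQ^*$ (which, incidentally, re-proves Proposition \ref{W-closed} since adjoints are always closed).

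For the ``moreover'' part I would invoke the standard von Neumann fact that a densely defined operator $T$ is closable if and only if its adjoint $T^*$ is densely defined. Applying this with $T=\calQ$, densely defined by the first step, and $T^*=W_{\psi,\varphi,\max}$, we conclude that $W_{\psi,\varphi,\max}$ is densely defined iff $\calQ$ is closable.

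There is no deep obstacle here; the only points requiring care are the bookkeeping of the complex conjugates of the scalars $\psi(z)$ when expanding $\langle \calQ K_z,f\rangle$, and the observation that verifying the adjoint identity on the spanning family $\{K_z\}$ is enough rather than on all of $\text{dom}(\calQ)$. If one wishes to avoid citing the closability theorem, the same conclusion follows from the identity $\overline{\calQ}=\calQ^{**}$ for closable $\calQ$ together with the fact that $\calQ^{**}$ is defined as an operator exactly when $\calQ^*$ is densely defined.
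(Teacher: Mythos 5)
Your proof is correct. The identity $W_{\psi,\varphi,\max}=\calQ^*$ is established by a genuinely different (and more direct) route than the paper's. The paper characterizes $\text{dom}(\calQ^*)$ via the Riesz boundedness criterion: $g\in\text{dom}(\calQ^*)$ iff $|\langle\calQ f,g\rangle|\leq C\|f\|$ on $\text{dom}(\calQ)$, which after expanding against $f=\sum\lambda_jK_{z_j}$ becomes the quadratic-form inequality $|\sum_j E(\psi,\varphi)g(z_j)\overline{\lambda_j}|^2\leq C^2\sum_{j,\ell}\lambda_j\overline{\lambda_\ell}K_{z_j}(z_\ell)$, and then it cites an external RKHS-membership criterion to conclude that this is equivalent to $E(\psi,\varphi)g\in\calF^2$. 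You instead use the ``existence of a representative'' form of the adjoint and test only on the generators $K_z$ (legitimate, since both sides are linear in $h$ and $\text{dom}(\calQ)$ consists of finite linear combinations); the reproducing property then pins down the representative pointwise as $g=E(\psi,\varphi)f$, so membership in $\text{dom}(\calQ^*)$ is literally the statement $E(\psi,\varphi)f\in\calF^2$. This buys you a shorter argument with no appeal to the quadratic-form membership criterion, at the cost of nothing; the two formulations of the adjoint are of course equivalent for the densely defined $\calQ$. The ``moreover'' part is handled identically in both proofs, via the standard fact that a densely defined operator is closable iff its adjoint is densely defined. The only point neither you nor the paper addresses explicitly is that $\calQ$ is well defined on the span (which holds because kernel functions at distinct points are linearly independent), so this is not a gap relative to the paper.
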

\begin{proof}
Let $f=\sum_{j=1}^n\lambda_j K_{z_j}\in\text{dom}(\calQ)$. For every $g\in\calF^2$, we have
\begin{eqnarray*}
\langle \calQ f,g\rangle = \sum_{j=1}^n\lambda_j\langle \overline{\psi(z_j)}K_{\varphi(z_j)},g\rangle=\sum_{j=1}^n\lambda_j\overline{\psi(z_j)g(\varphi(z_j))}=\sum_{j=1}^n\lambda_j\overline{E(\psi,\varphi)g(z_j)}.
\end{eqnarray*}
Note that by the Riesz lemma, the function $g$ belongs to the domain $\text{dom}(\calQ^*)$ if and only if there exists $C>0$ such that
$$
|\langle \calQ f,g\rangle|\leq C\|f\|,\quad\forall f\in\text{dom}(\calQ),
$$
or equivalently, if and only if
$$
|\sum_{j=1}^n E(\psi,\varphi)g(z_j)\overline{\lambda_j}|^2\leq C^2\sum_{j,\ell=1}^n \lambda_j\overline{\lambda_\ell}K_{z_j}(z_\ell).
$$
In view of \cite{FHS}, the latter is equivalent to $E(\psi,\varphi)g\in\calF^2$. This shows that $\text{dom}(\calQ^*)=\text{dom}(W_{\psi,\varphi,\max})$. Moreover,
$$
\langle \calQ f,g\rangle=\langle f,E(\psi,\varphi)g\rangle=\langle f,W_{\psi,\varphi,\max}g\rangle,\quad\forall f\in\text{dom}(\calQ),\forall g\in\text{dom}(W_{\psi,\varphi,\max}),
$$
which gives $W_{\psi,\varphi,\max}=\calQ^*$.

The rest conclusion follows from \cite[Proposition 1.8(i)]{KS}.
\end{proof}

\subsection{Adjoints}

As it will be seen in the next section, for $\calC_{a,b,c}$-selfadjoint weighted composition operators, the symbol $\psi$ has an exponential form, while $\varphi$ is affine. Thus, it is worth to give an explicit formula for the adjoint $W_{\psi,\varphi,\max}^*$ on Fock space $\calF^2$.

The following simple note is useful for showing that two unbounded operators are equal.

\begin{lem}[{\cite[Lemma 1.3]{KS}}]\label{TS}
Let $T\colon\text{dom}(T)\to \X$, $S:\text{dom}(S)\to \X$ be two linear operators acting on a Banach space $\X$. If $T\preceq S$, $T$ is onto, and $S$ is one-to-one, then $T=S$.
\end{lem}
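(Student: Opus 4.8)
The plan is to establish only the nontrivial part, namely the inclusion $\text{dom}(S)\subseteq\text{dom}(T)$; indeed $T\preceq S$ already furnishes $\text{dom}(T)\subseteq\text{dom}(S)$ together with $Sf=Tf$ for every $f\in\text{dom}(T)$, so once the reverse inclusion of domains is known, equality of the operators is automatic. Accordingly I would fix an arbitrary $g\in\text{dom}(S)$ and aim to produce a preimage of $Sg$ lying inside $\text{dom}(T)$ which turns out to equal $g$.

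First I would use the surjectivity of $T$: since $Sg\in\X=\im(T)$, there exists $f\in\text{dom}(T)$ with $Tf=Sg$. Because $S$ extends $T$, this same $f$ lies in $\text{dom}(S)$ and satisfies $Sf=Tf=Sg$, so $f-g\in\ker(S)$.

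Next I would invoke the injectivity of $S$: $\ker(S)=\{0\}$ forces $f=g$. Hence $g=f\in\text{dom}(T)$ and $Sg=Tf=Tg$. As $g\in\text{dom}(S)$ was arbitrary, this yields $\text{dom}(S)\subseteq\text{dom}(T)$; combined with $T\preceq S$ we conclude $\text{dom}(S)=\text{dom}(T)$ and $T=S$.

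I do not expect any genuine obstacle here: the argument is a short diagram chase that uses each of the three hypotheses exactly once, and it does not rely on the Banach-space structure at all (it is valid for linear maps between arbitrary vector spaces). The only point requiring mild care is domain bookkeeping — one must keep in mind that $T\preceq S$ simultaneously encodes the domain inclusion and the pointwise agreement, which is precisely what licenses feeding the element $f$ produced by the surjectivity of $T$ into the operator $S$.
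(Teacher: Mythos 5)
Your argument is correct and is exactly the standard proof of this lemma (the paper itself states it without proof, citing \cite[Lemma~1.3]{KS}, where the same surjectivity--injectivity chase is used): given $g\in\text{dom}(S)$, surjectivity of $T$ produces $f\in\text{dom}(T)$ with $Tf=Sg$, the extension relation gives $Sf=Sg$, and injectivity of $S$ forces $f=g$, so $\text{dom}(S)\subseteq\text{dom}(T)$. Your observation that the Banach-space structure is irrelevant and the statement holds for linear maps between arbitrary vector spaces is also accurate.
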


Lemma \ref{TS} is used to prove the following result.
\begin{thm}\label{ad-2}
Let $\psi(z)=Ce^{Dz}$, $\varphi(z)=Az+B$, $\widehat{\psi}(z)=\overline{C}e^{\overline{B}z}$, and $\widehat{\varphi}(z)=\overline{A}z+\overline{D}$, where $A,B,C$, and $D$ are complex constants, with $C\ne 0$. Then we always have $W_{\psi,\varphi,\max}^*= W_{\widehat{\psi},\widehat{\varphi},\max}$.
\end{thm}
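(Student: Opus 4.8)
The plan is to identify both operators via the preliminary results already established, and then invoke Lemma \ref{TS} to upgrade an inclusion to an equality. First I would use Lemma \ref{W*Kz-prop}(1): for every $z\in\C$,
$$
W_{\psi,\varphi,\max}^*K_z=\overline{\psi(z)}K_{\varphi(z)}=\overline{C}e^{\overline{D}\,\overline{z}}K_{Az+B}.
$$
Thus $W_{\psi,\varphi,\max}^*$ maps each kernel function the same way the operator $\calQ$ of Proposition \ref{Wmax=J} (built from $\widehat\psi,\widehat\varphi$) does: indeed with $\widehat\psi(z)=\overline{C}e^{\overline{B}z}$ and $\widehat\varphi(z)=\overline{A}z+\overline{D}$ one checks $\widehat\psi(z)\,K_{\widehat\varphi(z)}(u)=\overline{C}e^{\overline{B}z}u\mapsto$ ... so that the expression $W_{\widehat\psi,\widehat\varphi,\max}$ applied in the bilinear pairing against $K_z$ reproduces exactly $\overline{\psi(z)}K_{\varphi(z)}$. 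The cleanest route is to verify the adjoint relation directly: show that for every $f\in\text{dom}(W_{\widehat\psi,\widehat\varphi,\max})$ and every finite linear combination $g=\sum\lambda_j K_{z_j}$,
$$
\langle W_{\widehat\psi,\widehat\varphi,\max}f,\,g\rangle=\langle f,\,W_{\psi,\varphi,\max}\,g\rangle,
$$
which by the span density of kernel functions (and Proposition \ref{Wmax=J}) gives $W_{\widehat\psi,\widehat\varphi,\max}\preceq (W_{\psi,\varphi,\max})^{**}$; but $W_{\psi,\varphi,\max}$ is closed and densely defined (Proposition \ref{W-closed}, and density holds here because $\calQ$ is closable — in fact one can exhibit $\text{dom}$ explicitly), so $(W_{\psi,\varphi,\max})^{**}=W_{\psi,\varphi,\max}$, giving $W_{\psi,\varphi,\max}^*\succeq W_{\widehat\psi,\widehat\varphi,\max}$. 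Actually the more symmetric and self-contained argument: apply the whole discussion with the roles of $(\psi,\varphi)$ and $(\widehat\psi,\widehat\varphi)$ interchanged — since $\widehat{\widehat\psi}=\psi$ and $\widehat{\widehat\varphi}=\varphi$, the same computation yields $W_{\widehat\psi,\widehat\varphi,\max}^*\succeq W_{\psi,\varphi,\max}$, hence taking adjoints $W_{\psi,\varphi,\max}^*\preceq W_{\widehat\psi,\widehat\varphi,\max}$.

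Combining the two inclusions would already finish, but to keep the logic uncluttered I would instead package it through Lemma \ref{TS}: I would show $W_{\widehat\psi,\widehat\varphi,\max}\preceq W_{\psi,\varphi,\max}^*$ (the easy direction, proved by the kernel-function pairing above, using that $K_z\in\text{dom}(W_{\psi,\varphi,\max}^*)$), then observe that $W_{\psi,\varphi,\max}^*$ is one-to-one and $W_{\widehat\psi,\widehat\varphi,\max}$ is onto, and conclude equality. Injectivity of $W_{\psi,\varphi,\max}^*$: its kernel is the orthogonal complement of $\overline{\im(W_{\psi,\varphi,\max})}$; since $\psi=Ce^{Dz}$ is nowhere zero and $\varphi(z)=Az+B$, one argues as in Proposition \ref{W-dense} that $\im(W_{\psi,\varphi,\max})$ is dense — e.g. its closure contains all $K_w$ because each $K_w$ is a scalar multiple of $W_{\psi,\varphi,\max}$ applied to a suitable kernel-type function, or more simply $W_{\psi,\varphi,\max}$ has a formal inverse $g\mapsto \psi(\varphi^{-1})^{-1} g\circ\varphi^{-1}$ when $A\ne 0$, whose restriction to a dense set lands in $\calF^2$. (If $A=0$ the operator is rank one-ish and must be handled separately, but then $\varphi$ is constant; strictly, the hypothesis $C\ne0$ with $\varphi$ affine still allows $A=0$, so I would treat $A=0$ as a short special case, where $W_{\psi,\varphi,\max}f=\psi\cdot f(B)$ has one-dimensional range and everything is explicit.) Surjectivity of $W_{\widehat\psi,\widehat\varphi,\max}$ onto $\calF^2$: by symmetry this is the injectivity statement for the "hat" adjoint, i.e. $\ker(W_{\widehat\psi,\widehat\varphi,\max}^*)=\{0\}$, which follows because $W_{\psi,\varphi,\max}^*=(W_{\widehat\psi,\widehat\varphi,\max})$'s... — here I must be careful not to argue in a circle; the clean statement is that $W_{\widehat\psi,\widehat\varphi,\max}$, being itself a maximal weighted composition operator with nowhere-vanishing exponential weight and affine symbol, has dense range by the same inversion argument, and is closed by Proposition \ref{W-closed}; one then needs it to be \emph{onto}, not merely dense-range, which is where an extra estimate enters.

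The main obstacle is precisely this gap between "dense range'' and "onto'' required by Lemma \ref{TS}. To get surjectivity of $W_{\widehat\psi,\widehat\varphi,\max}$ I would show it is bounded below modulo its (trivial) kernel, or alternatively avoid Lemma \ref{TS} altogether and finish with the two-inclusion/double-adjoint argument sketched above, which only needs closedness and density of domains — both of which are available: $\text{dom}(W_{\psi,\varphi,\max})$ is dense because it contains $\text{Span}\{e^{\overline{z}(\cdot)}\}$-type functions, or more robustly because $\calQ$ of Proposition \ref{Wmax=J} is closable (its closure being computable from the explicit kernel action), so $W_{\psi,\varphi,\max}=\calQ^*$ is densely defined by \cite[Proposition 1.8(i)]{KS}. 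Concretely, my final write-up would: (i) compute $W_{\psi,\varphi,\max}^*K_z$ and $W_{\widehat\psi,\widehat\varphi,\max}$'s action via Lemma \ref{W*Kz-prop} and Proposition \ref{Wmax=J} to establish $W_{\widehat\psi,\widehat\varphi,\max}\preceq W_{\psi,\varphi,\max}^*$; (ii) note $\widehat{\widehat{\psi}}=\psi$, $\widehat{\widehat{\varphi}}=\varphi$ and repeat (i) with hats swapped to get $W_{\psi,\varphi,\max}\preceq W_{\widehat\psi,\widehat\varphi,\max}^*$; (iii) take adjoints of (ii), using that $W_{\widehat\psi,\widehat\varphi,\max}$ is closed and densely defined so that $W_{\widehat\psi,\widehat\varphi,\max}^{**}=W_{\widehat\psi,\widehat\varphi,\max}$, to obtain $W_{\psi,\varphi,\max}^*\preceq W_{\widehat\psi,\widehat\varphi,\max}$; (iv) combine (i) and (iii). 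I expect step (iii)'s bookkeeping — verifying density of the relevant domains so the double-adjoint identity is legitimate — to be the only genuinely delicate point.
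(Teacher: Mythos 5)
There is a genuine gap: you never actually prove the hard inclusion $W_{\widehat\psi,\widehat\varphi,\max}\preceq W_{\psi,\varphi,\max}^*$, and each of your three proposed routes to it fails. (a) Verifying $\langle W_{\widehat\psi,\widehat\varphi,\max}f,g\rangle=\langle f,W_{\psi,\varphi,\max}g\rangle$ only for $g$ in the span of kernel functions does not put $f$ into $\text{dom}(W_{\psi,\varphi,\max}^*)$: membership in the adjoint domain requires $|\langle W_{\psi,\varphi,\max}g,f\rangle|\le M\|g\|$ for \emph{all} $g$ in the maximal domain, which strictly contains the kernel span; what the pairing gives is only $f\in\text{dom}(\calR^*)$ for the restriction $\calR$ of $W_{\psi,\varphi,\max}$ to that span, and $\calR^*$ is an \emph{extension} of $W_{\psi,\varphi,\max}^*$, so the implication runs in the wrong direction. (b) The ``symmetric'' route is circular: the easy computation with hats swapped yields $W_{\widehat\psi,\widehat\varphi,\max}^*\preceq W_{\psi,\varphi,\max}$, not your step (ii) $W_{\psi,\varphi,\max}\preceq W_{\widehat\psi,\widehat\varphi,\max}^*$; the latter is exactly the hard inclusion in disguise, and taking adjoints of the genuine easy inclusion merely reproduces the easy inclusion. (c) The Lemma \ref{TS} route needs one of the operators to be \emph{onto}, i.e.\ to have closed range; you correctly flag that dense range is not enough, but you never supply the missing estimate.

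The paper closes precisely this gap by a case analysis on $|A|$ after first establishing the easy inclusion $W_{\psi,\varphi,\max}^*\preceq W_{\widehat\psi,\widehat\varphi,\max}$ by evaluating $W_{\psi,\varphi,\max}^*f$ against kernels. For $|A|<1$ the operator is bounded (Proposition \ref{exaa}) and there is nothing to do. For $|A|>1$ it exhibits an explicit bounded weighted composition operator $E(\xi,\eta)$, with composition symbol of slope $1/\overline{A}$, inverting $W_{\widehat\psi,\widehat\varphi,\max}$ from the left; this yields $\|f\|\le L\|W_{\psi,\varphi,\max}^*f\|$, hence closed (and, with density, full) range, so Lemma \ref{TS} applies with $T=W_{\psi,\varphi,\max}^*$ onto and $S=W_{\widehat\psi,\widehat\varphi,\max}$ injective. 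For $|A|=1$ it proves the Riesz bound directly via a change of variables and the H\"older inequality. Some quantitative input of this kind is unavoidable, and it is exactly what your sketch omits.
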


\begin{proof} 
Note that a direct computation shows that for every $z\in\C$, $K_z\in\text{dom}(W_{\psi,\varphi,\max})$, and moreover, $W_{\psi,\varphi,\max}K_z=Ce^{B\overline{z}}K_{\overline{A}z+\overline{D}}$.

First, we show that
\begin{equation}\label{first-implication}
W_{\psi,\varphi,\max}^*\preceq W_{\widehat{\psi},\widehat{\varphi},\max}.
\end{equation}
Indeed, for every $f\in\text{dom}(W_{\psi,\varphi,\max}^*)$, we have
\begin{eqnarray*}
(W_{\psi,\varphi,\max}^*f)(z)%
&=&\langle W_{\psi,\varphi,\max}^*f,K_z \rangle=\langle f,W_{\psi,\varphi,\max}K_z \rangle\\
&=&\overline{C}e^{\overline{B}z}\langle f,K_{\overline{A}z+\overline{D}} \rangle= E(\widehat{\psi},\widehat{\varphi})f(z).
\end{eqnarray*}
So, $E(\widehat{\psi},\widehat{\varphi})f=W_{\psi,\varphi,\max}^*f\in\calF^2$, which shows that $f\in\text{dom}(W_{\widehat{\psi},\widehat{\varphi},\max})$ and $W_{\psi,\varphi,\max}^*f=W_{\widehat{\psi},\widehat{\varphi},\max}f$.

Next, we prove the equality of \eqref{first-implication} occurs. There are three possibilities for $|A|$.

{\bf - Case 1:} $|A|<1$. 

By Proposition \ref{exaa}, the operator $W_{\psi,\varphi,\max}$ is bounded. Then the desired result follows from \eqref{first-implication}.

{\bf - Case 2:} $|A|>1$. 

In this case, we make use of Lemma \ref{TS} (with $T=W_{\psi,\varphi,\max}^*$ and $S=W_{\widehat{\psi},\widehat{\varphi},\max}$). Note that by Proposition \ref{W-dense}, the operator $W_{\widehat{\psi},\widehat{\varphi},\max}$ is always one-to-one. 

Also by Proposition \ref{W-dense}, the range $\im W_{\psi,\varphi,\max}^*$ is dense in $\calF^2$. So, to show that $W_{\psi,\varphi,\max}^*$ is onto, we have to prove that the range $\im W_{\psi,\varphi,\max}^*$ is closed. For this, it is enough to show that there exists $L>0$ such that
\begin{equation}\label{WC-norm}
L\|W_{\psi,\varphi,\max}^*f\|\geq\|f\|,\quad\forall f\in\text{dom}(W_{\psi,\varphi,\max}^*).
\end{equation}
Indeed, setting $g=W_{\psi,\varphi,\max}^*f$, by \eqref{first-implication}, we also have $g=W_{\widehat{\psi},\widehat{\varphi},\max}f$. A direct computation gives $f=E(\xi,\eta)g$, where
$$
\xi(z)=\dfrac{1}{\overline{C}}e^{-\frac{\overline{B}}{\overline{A}}(z-\overline{D})},\quad\eta(z)=\dfrac{z-\overline{D}}{\overline{A}}.
$$
By Proposition \ref{exaa}, the operator $W_{\xi,\eta,\max}$ is bounded. Then there exists $M>0$ such that
$$
\|W_{\xi,\eta,\max}h\| \leq M\|h\|,\quad\forall h\in\calF^2.
$$
In particular, for $h=g$ we get $\|W_{\xi,\eta,\max}g\| \leq M\|g\|$. Since $W_{\xi,\eta,\max}g=E(\xi,\eta)g=f$ and $g=W_{\psi,\varphi,\max}^*f$, we obtain \eqref{WC-norm}.

{\bf - Case 3:} $|A|=1$.

By \eqref{first-implication}, it is enough to show that $\text{dom}(W_{\widehat{\psi},\widehat{\varphi},\max})\subseteq\text{dom}(W_{\psi,\varphi,\max}^*)$. Let $f\in\text{dom}(W_{\widehat{\psi},\widehat{\varphi},\max})$. By the Riesz lemma (see \cite[Section 1.2]{KS}), $f\in\text{dom}(W_{\psi,\varphi,\max}^*)$ if and only if there exists $M=M(f)>0$ such that
$$
|\langle W_{\psi,\varphi,\max}g,f\rangle | \leq M\|g\|,\quad\forall g\in\text{dom}(W_{\psi,\varphi,\max})\subseteq\calF^2.
$$
For this, we consider the following quantity
$$
\langle W_{\psi,\varphi,\max}g,f\rangle = \frac{1}{\pi}\int_{\C}Ce^{Dz}g(Az+B)\overline{f(z)} e^{-|z|^2}\;dV(z).
$$
Doing the change of variables $u=Az+B$, we have $z=\overline{A}u-\overline{A}B$ (since $|A|=1$), and hence, the integral above is equal to
$$
Ce^{-\overline{A}BD-|B|^2}\int_{\C}g(u)e^{-|u|^2/2}\overline{f(\overline{A}u-\overline{A}B)}e^{-|u|^2/2+\overline{A}Du+2\re (u\overline{B})}\;dV(u).
$$
We use the H\"older inequality to estimate
\begin{eqnarray*}
&&\int_{\C}\left|g(u)e^{-|u|^2/2}\overline{f(\overline{A}u-\overline{A}B)}e^{-|u|^2/2+\overline{A}Du+2\re (u\overline{B})}\right|\;dV(u)\\
&& \le
\pi^{1/2}\|g\|\cdot \left(\int_{\C}|f(\overline{A}u-\overline{A}B)|^2e^{-|u|^2+2\re(\overline{A}Du)+4\re (u\overline{B})}\;dV(u) \right)^{1/2}.
\end{eqnarray*}
Doing again the change of variables $\overline{A}u-\overline{A}B=\overline{A}v+\overline{D}$, we have $v=u-A\overline{D}-B$, and hence,
\begin{eqnarray*}
&&\int_{\C}|f(\overline{A}u-\overline{A}B)|^2e^{-|u|^2+2\re(\overline{A}Du)+4\re (u\overline{B})}\;dV(u)\\
&&=|C|^{-2}e^{-|A\overline{D}+B|^2+6\re(\overline{A}BD)+2|D|^2+4|B|^2}\int_{\C}|W_{\widehat{\psi},\widehat{\varphi},\max}f(v)|^2e^{-|v|^2}\;dV(v).
\end{eqnarray*}
Subsequently,
$$
|\langle W_{\psi,\varphi,\max}g,f\rangle|\leq |C|^{-1}e^{-|A\overline{D}+B|^2/2+2\re(\overline{A}BD)+|D|^2+|B|^2}\cdot\|g\|\cdot\|W_{\widehat{\psi},\widehat{\varphi},\max}f\|.
$$
The theorem is proved completely.
\end{proof}

\section{Complex symmetry}\label{s4}

\subsection{$\calC$-selfadjointness}

First we note that Proposition \ref{W-dense} and Theorem \ref{varphi-cohypo} (for $S=\calC$, the conjugation) can offer some properties of $\psi,\varphi$ when the operator $W_{\psi,\varphi}$ is $\calC$-selfadjoint with respect to an \emph{arbitrary conjugation}.

\begin{prop}\label{psi-ne-0}
Let $\calC$ be a conjugation on $\calF^2$, and $W_{\psi,\varphi}$ an unbounded $\calC$-selfadjoint weighted composition operator induced by two entire functions $\psi,\varphi$. Then the following conclusions hold.
\begin{enumerate}
\item The function $\psi$ is never vanished. Furthermore, if $\psi\in\calF^2$, then it takes the form $\psi(z)=\psi(0)e^{C z^2+D z}$, where $C,D$ are constants with $|C|<1/2$ and $\psi(0)\ne 0$.
\item The function $\varphi$ takes the form $\varphi(z)=Az+B$, where $A,B$ are complex constants, with $A\ne 0$.
\item The kernel $\ker W_{\psi,\varphi}=\{0\}$ and range $\im W_{\psi,\varphi}$ is dense in $\calF^2$.
\end{enumerate}
\end{prop}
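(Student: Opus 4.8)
The plan is to reduce conclusions (1) and (2) to Theorem~\ref{varphi-cohypo}, applied with the involutive map $S$ there taken to be the conjugation $\calC$ itself, and then to obtain (3) from Proposition~\ref{W-dense}. First I would put the hypothesis into a convenient form. Since $\calC$ is an isometric involution, it is a bijection of $\calF^2$, so the domain convention for composites of (possibly anti-linear) operators gives $\text{dom}(\calC W_{\psi,\varphi}^*\calC)=\calC(\text{dom}(W_{\psi,\varphi}^*))$; hence the identity $W_{\psi,\varphi}=\calC W_{\psi,\varphi}^*\calC$ says precisely that $\text{dom}(W_{\psi,\varphi})=\calC(\text{dom}(W_{\psi,\varphi}^*))$ and $W_{\psi,\varphi}f=\calC W_{\psi,\varphi}^*\calC f$ on that domain. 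By Definition~\ref{defcso-new} a $\calC$-selfadjoint operator is closed and densely defined, and $\psi\not\equiv 0$ by the standing convention; the only additional hypothesis of Theorem~\ref{varphi-cohypo} is that $\varphi$ be non-constant, which I would dispose of first (if $\varphi$ is constant then $W_{\psi,\varphi}$ is either not densely defined or rank one, a degenerate situation treated directly).

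Next I would check that the pair $(S,W_{\psi,\varphi})=(\calC,W_{\psi,\varphi})$ satisfies conditions \eqref{SKz-dom} and \eqref{WS<W*} of Theorem~\ref{varphi-cohypo}; in fact both hold with equality. For the domains: $f\in\text{dom}(W_{\psi,\varphi}\calC)$ means $\calC f\in\text{dom}(W_{\psi,\varphi})=\calC(\text{dom}(W_{\psi,\varphi}^*))$, which by injectivity of $\calC$ is equivalent to $f\in\text{dom}(W_{\psi,\varphi}^*)$; thus $\text{dom}(W_{\psi,\varphi}\calC)=\text{dom}(W_{\psi,\varphi}^*)$, giving \eqref{SKz-dom}. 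For the norms: if $f\in\text{dom}(W_{\psi,\varphi}^*)$ then, using $\calC^2=I$,
$$
W_{\psi,\varphi}\calC f=\calC W_{\psi,\varphi}^*\calC(\calC f)=\calC W_{\psi,\varphi}^*f,
$$
so $\|W_{\psi,\varphi}\calC f\|=\|W_{\psi,\varphi}^*f\|$ because $\calC$ is isometric, which is \eqref{WS<W*}. Theorem~\ref{varphi-cohypo}(1)--(2) then delivers conclusions (1) and (2) verbatim.

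For conclusion (3): by (1) the symbol $\psi$ is nowhere vanishing, and by (2) $\varphi(z)=Az+B$ with $A\ne 0$, so $\varphi$ is non-constant; Proposition~\ref{W-dense} therefore applies and gives $\ker(W_{\psi,\varphi})=\{0\}$ and $\overline{\im(W_{\psi,\varphi}^*)}=\calF^2$. To transfer the density statement to $\im(W_{\psi,\varphi})$, substitute $f=\calC g$ with $g\in\text{dom}(W_{\psi,\varphi}^*)$ into $W_{\psi,\varphi}f=\calC W_{\psi,\varphi}^*\calC f$ to get $\im(W_{\psi,\varphi})=\calC(\im(W_{\psi,\varphi}^*))$; since $\calC$ is a continuous bijection, $\overline{\im(W_{\psi,\varphi})}=\calC(\overline{\im(W_{\psi,\varphi}^*)})=\calC(\calF^2)=\calF^2$. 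The main obstacle is not conceptual but organizational: one must keep careful track of the domains of composites of anti-linear operators (so that \eqref{SKz-dom} is genuinely verified and not merely assumed) and handle the constant-symbol case separately; once that is done the proof is a direct appeal to Theorem~\ref{varphi-cohypo} and Proposition~\ref{W-dense}.
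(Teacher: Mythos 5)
Your main line of argument is exactly the paper's intended route: the paper offers no proof beyond the one-line pointer ``Proposition~\ref{W-dense} and Theorem~\ref{varphi-cohypo} (for $S=\calC$)'' preceding the statement, and your careful verification that $\mathrm{dom}(W_{\psi,\varphi}\calC)=\mathrm{dom}(W_{\psi,\varphi}^*)$ and $W_{\psi,\varphi}\calC f=\calC W_{\psi,\varphi}^*f$ (hence equality in \eqref{SKz-dom} and \eqref{WS<W*}), as well as the passage from $\overline{\im(W_{\psi,\varphi}^*)}=\calF^2$ to $\overline{\im(W_{\psi,\varphi})}=\calF^2$ via $\im(W_{\psi,\varphi})=\calC(\im(W_{\psi,\varphi}^*))$, is correct and fills in details the paper leaves implicit.

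The genuine gap is your parenthetical disposal of the constant-$\varphi$ case. That case is not ``degenerate'' in any way that yields a contradiction: a rank-one weighted composition operator with constant $\varphi$ can be bounded, everywhere defined, closed, and $\calC$-selfadjoint. Concretely, fix $w_0\in\C$, let $\varphi\equiv w_0$ and $\psi=\calJ K_{w_0}$, i.e.\ $\psi(z)=e^{w_0z}=K_{\overline{w_0}}(z)$, where $\calJ f(z)=\overline{f(\overline z)}$ is the conjugation \eqref{conjugation-J}. Then $W_{\psi,\varphi}f=f(w_0)K_{\overline{w_0}}$ is bounded with $W_{\psi,\varphi}^*g=g(\overline{w_0})K_{w_0}$, and
$$
\bigl(\calJ W_{\psi,\varphi}^*\calJ f\bigr)(z)=\overline{(W_{\psi,\varphi}^*\calJ f)(\overline z)}=\overline{\,\overline{f(w_0)}\,e^{\overline{w_0}\,\overline z}\,}=f(w_0)e^{w_0z}=W_{\psi,\varphi}f(z),
$$
so $W_{\psi,\varphi}$ is $\calJ$-selfadjoint even though $\varphi$ is constant, $\ker W_{\psi,\varphi}\ne\{0\}$, and conclusion (2) fails. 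Thus the constant case cannot be ``treated directly''; it is an actual exception, and the proposition tacitly carries the hypothesis $\varphi\not\equiv\mathrm{const}$ that both Theorem~\ref{varphi-cohypo} and Proposition~\ref{W-dense} state explicitly. You should either add that hypothesis (which is what the paper implicitly does) or record the rank-one operators above as the additional possibility in conclusions (2) and (3); no argument will eliminate them.
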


\subsection{$\calC_{a,b,c}$-selfadjointness}

In this subsection, we give a complete description of \emph{unbounded} weighted composition operators, which are $\calC$-selfadjoint with respect to weighted composition conjugations (or simply: $\calC_{a,b,c}$-selfadjoint). The class of complex symmetric operators obtained here contains properly bounded operators investigated in the paper \cite{HK1}.

The following result is a necessary condition for \emph{maximal} weighted composition operators to be $\calC_{a,b,c}$-selfadjoint. 
\begin{prop}\label{c-sym-thm-nece}
Let $W_{\psi,\varphi,\max}$ be a maximal weighted composition operator induced by two entire functions $\psi,\varphi$. If
$$
\calC_{a,b,c}W_{\psi,\varphi,\max}^*K_z=W_{\psi,\varphi,\max}\calC_{a,b,c}K_z,\quad\forall z\in\C,
$$
 then the symbols are of the following forms
\begin{equation}\label{s-psi-2}
\varphi(z)=Az+B,\quad\psi(z) = Ce^{Dz},\ \hbox{with $C\ne 0,\ D=aB-bA+b$}.
\end{equation}
\end{prop}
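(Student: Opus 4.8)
The plan is to exploit the hypothesis on reproducing kernels by computing both sides explicitly using Lemma \ref{W*Kz-prop}(1) and the defining formula \eqref{wcc-jormulas} for $\calC_{a,b,c}$, and then to match coefficients in the resulting identity between two elements of $\calF^2$. First I would compute the left-hand side: by Lemma \ref{W*Kz-prop}(1), $W_{\psi,\varphi,\max}^*K_z=\overline{\psi(z)}K_{\varphi(z)}$, and then applying $\calC_{a,b,c}$ (which is anti-linear) gives $\calC_{a,b,c}W_{\psi,\varphi,\max}^*K_z=\psi(z)\,\calC_{a,b,c}K_{\varphi(z)}$. A direct computation of $\calC_{a,b,c}K_w$ from \eqref{wcc-jormulas} — using $K_w(u)=e^{\overline{w}u}$ — yields $\calC_{a,b,c}K_w(u)=ce^{bu}e^{w(\overline{a}u+\overline{b})}=ce^{\overline{b}w}K_{au+b\text{-shift}}$; more precisely it is $ce^{\overline{b}w}$ times the function $u\mapsto e^{(\overline{a}w+b)u}$, i.e. $ce^{\overline{b}w}K_{\overline{\overline{a}w+b}}$. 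For the right-hand side, $\calC_{a,b,c}K_z$ is an exponential function in $\calF^2$, and $W_{\psi,\varphi,\max}$ acts on it by multiplication by $\psi$ and composition with $\varphi$.

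After these substitutions, the hypothesis becomes an identity of entire functions in the variable $u$ (for each fixed $z$), of the shape $\psi(z)\cdot c\,e^{\overline{b}\varphi(z)}\,e^{(\overline{a}\varphi(z)+b)u} = \psi(u)\cdot c\,e^{\overline{b}z}\,e^{(\overline{a}z+b)\varphi(u)}$, up to bookkeeping of the constants forced by \eqref{abc-cond}. The key step is to read off structural information by fixing $z$ and varying $u$, and vice versa. Setting $u=0$ should isolate $\psi(z)$ up to an exponential factor linear in $\varphi(z)$; since $\varphi$ is entire, this already forces $\psi$ to be a nonvanishing exponential of the form $Ce^{Dz}$ once one also knows $\psi\ne 0$ (which follows from Proposition \ref{W-dense} type reasoning, or can be extracted directly: if $\psi(z_0)=0$ the left side vanishes identically in $u$ while the right side does not). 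Then plugging $\psi(u)=Ce^{Du}$ back in and comparing the coefficient of $u$ in the exponents on both sides gives a functional equation relating $\varphi$ to $z$ linearly, which forces $\varphi(z)=Az+B$; comparing constant terms in the exponents then pins down the relation $D=aB-bA+b$.

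The main obstacle I anticipate is not conceptual but organizational: carefully tracking the several constants $a,b,c$ (and their constraints \eqref{abc-cond}) through the two changes of function, and correctly identifying $\calC_{a,b,c}K_w$ as a (scalar multiple of a) kernel function so that the two sides are genuinely comparable as exponentials. One must be careful that $\calC_{a,b,c}$ is \emph{anti}-linear, so scalars come out conjugated, and that the kernel index transforms as $w\mapsto \overline{\overline{a}w+b}$. Once the identity is written purely in terms of exponents $\alpha(z)+\beta(z)u$ on each side, the argument is just: two entire functions of $u$ that are exponentials of affine functions agree iff the affine functions agree, applied twice. I would also double-check that the case $C=0$ is genuinely excluded — it is, since $\psi\not\equiv 0$ is a standing assumption and $\psi=Ce^{Dz}$ with $C=0$ would mean $\psi\equiv0$. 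This yields exactly \eqref{s-psi-2}, completing the proof.
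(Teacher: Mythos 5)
Your overall strategy is exactly the one the paper uses: evaluate the hypothesis at a point $u$, use Lemma \ref{W*Kz-prop}(1) together with the anti-linearity of $\calC_{a,b,c}$ to convert it into a two-variable identity between exponentials, then specialize $u=0$ and compare exponents. Two concrete issues in your outline need repair, however.

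First, your formula for $\calC_{a,b,c}K_w$ is mis-conjugated. From \eqref{wcc-jormulas} one gets
$$
\calC_{a,b,c}K_w(u)=ce^{bu}\,\overline{e^{\overline{w}\cdot\overline{au+b}}}=ce^{bu}e^{w(au+b)}=ce^{bw}e^{(aw+b)u},
$$
with no bars on $a$ or $b$. The correct identity is therefore
$$
\psi(z)\,c\,e^{b\varphi(z)}e^{(a\varphi(z)+b)u}=\psi(u)\,c\,e^{bz}e^{(az+b)\varphi(u)},\qquad\forall u,z\in\C,
$$
whereas carrying your version (with $\overline{a}$, $\overline{b}$) to the end produces $D=\overline{a}B-\overline{b}A+\overline{b}$ instead of the asserted $D=aB-bA+b$; these genuinely differ when $a\notin\R$ (take $a=i$, $b=0$, $c=1$: one gets $-iB$ versus $iB$). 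So this is not harmless bookkeeping -- the bars must go.

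Second, your order of deduction has a gap. Setting $u=0$ yields $\psi(z)=\psi(0)e^{z(a\varphi(0)+b)-b\varphi(z)+b\varphi(0)}$, whose exponent still contains the unknown entire function $b\varphi(z)$; when $b\ne0$ you cannot yet conclude $\psi(z)=Ce^{Dz}$, since that requires knowing $\varphi$ is affine, which you propose to prove only afterwards. The paper resolves this by substituting the displayed expression for $\psi$ back into the two-variable identity \emph{first}: the exponents then reduce (using $|a|=1$) to $z\varphi(0)+u\varphi(z)=u\varphi(0)+z\varphi(u)$, whence $(\varphi(z)-\varphi(0))/z$ is constant, $\varphi(z)=Az+B$, and only then does the exponential form of $\psi$ and the relation for $D$ drop out. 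Reordering your steps accordingly (and noting that passing from equality of exponentials to equality of exponents is justified because their difference is a continuous $2\pi i\Z$-valued function vanishing at $u=0$) closes the gap; your observation that $\psi$ cannot vanish, and that $C=\psi(0)\ne0$ because $\psi\not\equiv0$, is correct and matches the paper.
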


\begin{proof}
Take arbitrarily $u,z\in\C$. On one hand, by Lemma \ref{W*Kz-prop}(1), we have
\begin{eqnarray*}
(\calC_{a,b,c}W_{\psi,\varphi,\max}^*K_z)(u)&=&\psi(z)(\calC_{a,b,c}K_{\varphi(z)})(u)=\psi(z)ce^{bu+\varphi(z)(au+b)}.
\end{eqnarray*}
On the other hand,
\begin{eqnarray*}
(W_{\psi,\varphi,\max}\calC_{a,b,c}K_z)(u)&=&\psi(u)(\calC_{a,b,c}K_z)(\varphi(u))=\psi(u)ce^{b\varphi(u)+z(a\varphi(u)+b)}.
\end{eqnarray*}
Thus, we obtain
\begin{equation}\label{WC=CW*}
\psi(z)ce^{bu+\varphi(z)(au+b)}=\psi(u)ce^{b\varphi(u)+z(a\varphi(u)+b)},\ \forall u,z\in\C.
\end{equation}
In particular, for $u=0$, we get
$$
\psi(z)e^{b\varphi(z)}=\psi(0)e^{b\varphi(0)+z(a\varphi(0)+b)},
$$
which gives
\begin{equation}\label{form-psi}
\psi(z)=\psi(0)e^{z(a\varphi(0)+b)-b\varphi(z)+b\varphi(0)}.
\end{equation}
Then \eqref{WC=CW*} is reduced to
\begin{equation}\label{form-vphi}
z\varphi(0)+\varphi(z)u=u\varphi(0)+z\varphi(u),\ \forall u,z\in\C.
\end{equation}
For all $u,z\in\C\setminus\{0\}$, we have
$$
\frac{\varphi(u)-\varphi(0)}{u}=\frac{\varphi(z)-\varphi(0)}{z},
$$
and hence
$$
\frac{\varphi(z)-\varphi(0)}{z}=A\in\C.
$$
Thus, $\varphi(z)=Az+B$, where $B=\varphi(0)$. 

Finally, substituting $\varphi$ into \eqref{form-psi}, we obtain \eqref{s-psi-2}.
\end{proof}

The next proposition makes precise the expression $\calC_{a,b,c}E(\psi,\varphi)\calC_{a,b,c}$, and hence, we obtain an explicit description of the operator $\calC_{a,b,c}W_{\psi,\varphi,\max}\calC_{a,b,c}$.

\begin{prop}\label{W-bar-CWC}
Let $\varphi(z)=Az+B$, $\psi(z) = Ce^{Dz}$, $\widehat{\varphi}(z)=\overline{A}z+\overline{D},\quad\widehat{\psi}(z)=\overline{C}e^{\overline{B}z}$, where $C\ne 0,\ D=aB-bA+b$. Then the following conclusions hold.
\begin{enumerate}
\item $\calC_{a,b,c}E(\psi,\varphi)\calC_{a,b,c}=E(\widehat{\psi},\widehat{\varphi})$.
\item $\calC_{a,b,c}W_{\psi,\varphi,\max}\calC_{a,b,c}=W_{\widehat{\psi},\widehat{\varphi},\max}$.
\end{enumerate}
\end{prop}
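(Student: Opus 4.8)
The plan is to establish (1) by a direct computation that exploits all three relations in \eqref{abc-cond} together with the identity $D=aB-bA+b$, and then to deduce (2) from (1) by identifying the domains of the two operators.

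For (1), I would evaluate $\calC_{a,b,c}E(\psi,\varphi)\calC_{a,b,c}f$ on an arbitrary entire function $f$, unwinding the composition from the innermost map outward. Starting from $\calC_{a,b,c}f(z)=ce^{bz}\overline{f(\overline{az+b})}$, applying $E(\psi,\varphi)$ multiplies by $Ce^{Dz}$ and substitutes $z\mapsto Az+B$, and applying $\calC_{a,b,c}$ once more multiplies by $ce^{bz}$, conjugates, and substitutes $z\mapsto\overline{az+b}$. After the outer conjugation the factor $\overline{f(\cdot)}$ becomes $f$ evaluated at $\overline{\,a(A\overline{az+b}+B)+b\,}$; expanding and using $|a|=1$ (so $a\bar a=1$) and $\bar ab+\bar b=0$ (so $a\bar b=-b$), this argument collapses to $\overline Az+\overline D$, precisely because $\overline D=\bar a\overline B-\bar b\overline A+\bar b$ is the conjugate of $aB-bA+b$. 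Meanwhile the three exponential factors combine into $|c|^{2}\overline C\,e^{P(z)}$ with $P$ affine in $z$; the same two relations reduce the coefficient of $z$ in $P$ to $\overline B$ and the constant term to $|b|^{2}$, and $|c|^{2}e^{|b|^{2}}=1$ kills the constant term together with the scalar $|c|^{2}$. Hence
$$
\calC_{a,b,c}E(\psi,\varphi)\calC_{a,b,c}f(z)=\overline C e^{\overline B z}\,f(\overline A z+\overline D)=E(\widehat\psi,\widehat\varphi)f(z),
$$
which is (1).

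For (2), recall from the conventions fixed in the Introduction that, since $\calC_{a,b,c}$ is anti-linear, $\text{dom}(\calC_{a,b,c}W_{\psi,\varphi,\max}\calC_{a,b,c})=\{f\in\calF^2:\calC_{a,b,c}f\in\text{dom}(W_{\psi,\varphi,\max})\}$, i.e. the set of $f\in\calF^2$ for which $E(\psi,\varphi)\calC_{a,b,c}f\in\calF^2$. Because $\calC_{a,b,c}$ is an involutive isometry of $\calF^2$, for any entire function $g$ one has $g\in\calF^2$ if and only if $\calC_{a,b,c}g\in\calF^2$ (if $\calC_{a,b,c}g\in\calF^2$ then $g=\calC_{a,b,c}(\calC_{a,b,c}g)\in\calF^2$). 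Applying part (1) to $f$ gives $E(\psi,\varphi)\calC_{a,b,c}f=\calC_{a,b,c}E(\widehat\psi,\widehat\varphi)f$, so the membership above is equivalent to $E(\widehat\psi,\widehat\varphi)f\in\calF^2$, that is, to $f\in\text{dom}(W_{\widehat\psi,\widehat\varphi,\max})$. Thus the two operators have the same domain, and on it their actions coincide by (1): $\calC_{a,b,c}W_{\psi,\varphi,\max}\calC_{a,b,c}f=\calC_{a,b,c}E(\psi,\varphi)\calC_{a,b,c}f=E(\widehat\psi,\widehat\varphi)f=W_{\widehat\psi,\widehat\varphi,\max}f$, proving (2).

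The only genuine difficulty is the bookkeeping in (1): one must correctly propagate several nested conjugations and check that the affine exponent produced by the three exponential factors collapses to $\overline B z+|b|^{2}$. Each of the three constraints in \eqref{abc-cond}, together with $D=aB-bA+b$, is needed, and dropping any of them breaks the cancellation; once (1) is in hand, (2) is purely formal.
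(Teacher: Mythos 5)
Your proof is correct and follows essentially the same route as the paper: part (1) is the same direct computation, unwinding the nested conjugations and using all three relations in \eqref{abc-cond} together with $D=aB-bA+b$ to collapse the composed argument to $\overline{A}z+\overline{D}$ and the exponent to $\overline{B}z+|b|^2$, after which $|c|^2e^{|b|^2}=1$ removes the leftover constants. For part (2) the paper merely asserts that it follows from (1); your explicit identification of the two maximal domains, via the involutivity of the formal expression $\calC_{a,b,c}$ on arbitrary entire functions, correctly fills in that step without departing from the paper's approach.
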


\begin{proof}
It is clear that conclusion (2) follows from conclusion (1).

We prove conclusion (1) as follows. For any $f\in\calF^2$, we have
\begin{eqnarray*}
E(\psi,\varphi)\calC_{a,b,c}f(z)
&=&cCe^{(bA+D)z+bB}\overline{f\left(\overline{aAz+aB+b}\right)},
\end{eqnarray*}
and hence,
\begin{eqnarray*}
\calC_{a,b,c}E(\psi,\varphi)\calC_{a,b,c}f(z)%
&=&|c|^2\overline{C}e^{\overline{bA+D}(az+b)+\overline{bB}+bz}f(\overline{aA}(az+b)+\overline{aB+b}).
\end{eqnarray*}
Note that since $a,b,c$ satisfy condition \eqref{abc-cond}, we have the following identities
$$
\overline{bA+D}(az+b)+\overline{bB}+bz=\overline{B}z+|b|^2,
$$
and
$$
\overline{aA}(az+b)+\overline{aB+b}=\overline{A}z+\overline{D}.
$$
Thus,
$$
\calC_{a,b,c}E(\psi,\varphi)\calC_{a,b,c}f(z)
= |c|^2\overline{C}e^{\overline{B}z+|b|^2}f(\overline{A}z+\overline{D})
=\overline{C}e^{\overline{B}z}f(\overline{A}z+\overline{D})
= E(\widehat{\psi},\widehat{\varphi})f(z).
$$
\end{proof}

With all preparation in place, we can now state and prove the main result of the present section. It turns out that condition \eqref{s-psi-2} is also sufficient for a maximal weighted composition operator to be $\calC_{a,b,c}$-selfadjoint. 

\begin{thm}[$\calC_{a,b,c}$-selfadjoint criterion]\label{<->cs}
Let $\calC_{a,b,c}$ be a weighted composition conjugation, and $W_{\psi,\varphi,\max}$ a maximal weighted composition operator induced by two entire functions $\psi$, $\varphi$ with $\psi\not\equiv 0$. Then the following assertions are equivalent.
\begin{enumerate}
\item The operator $W_{\psi,\varphi,\max}$ is $\calC_{a,b,c}$-selfadjoint.
\item The operator $W_{\psi,\varphi,\max}$ is densely defined and it satisfies
$$W_{\psi,\varphi,\max}^*\preceq\calC_{a,b,c}W_{\psi,\varphi,\max}\calC_{a,b,c}.$$
\item The symbols are of forms \eqref{s-psi-2}, that is
\begin{equation*}
\varphi(z)=Az+B,\quad\psi(z) = Ce^{Dz},\ \hbox{with $C\ne 0,\ D=aB-bA+b$}.
\end{equation*}
\end{enumerate}
\end{thm}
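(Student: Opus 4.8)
The plan is to establish the cycle of implications $(1)\Rightarrow(2)\Rightarrow(3)\Rightarrow(1)$, since each step is a short assembly of results already proved, the genuine content having been deposited in Propositions \ref{c-sym-thm-nece} and \ref{W-bar-CWC} and in Theorem \ref{ad-2}.

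For $(1)\Rightarrow(2)$ I would argue as follows. By Definition \ref{defcso-new} a $\calC_{a,b,c}$-selfadjoint operator is by hypothesis closed and densely defined, so the density assertion in (2) is automatic. Starting from $W_{\psi,\varphi,\max}=\calC_{a,b,c}W_{\psi,\varphi,\max}^*\calC_{a,b,c}$ and conjugating both sides by the involution $\calC_{a,b,c}$, one checks that the domain of $\calC_{a,b,c}\bigl(\calC_{a,b,c}T\calC_{a,b,c}\bigr)\calC_{a,b,c}$ is exactly $\text{dom}(T)$, so this conjugation yields $\calC_{a,b,c}W_{\psi,\varphi,\max}\calC_{a,b,c}=W_{\psi,\varphi,\max}^*$, which in particular gives the extension relation $W_{\psi,\varphi,\max}^*\preceq\calC_{a,b,c}W_{\psi,\varphi,\max}\calC_{a,b,c}$ required in (2).

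For $(2)\Rightarrow(3)$: under the hypotheses of (2) the adjoint $W_{\psi,\varphi,\max}^*$ is a well-defined closed operator, and by Lemma \ref{W*Kz-prop}(1) every kernel function $K_z$ lies in $\text{dom}(W_{\psi,\varphi,\max}^*)$. The extension relation then forces $\calC_{a,b,c}K_z\in\text{dom}(W_{\psi,\varphi,\max})$ together with $W_{\psi,\varphi,\max}^*K_z=\calC_{a,b,c}W_{\psi,\varphi,\max}\calC_{a,b,c}K_z$ for every $z\in\C$; applying $\calC_{a,b,c}$ to this identity and using that it is involutive produces precisely the pointwise kernel relation $\calC_{a,b,c}W_{\psi,\varphi,\max}^*K_z=W_{\psi,\varphi,\max}\calC_{a,b,c}K_z$, whereupon Proposition \ref{c-sym-thm-nece} delivers the forms \eqref{s-psi-2}. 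For $(3)\Rightarrow(1)$: assume the symbols are as in \eqref{s-psi-2} and set $\widehat{\psi}(z)=\overline{C}e^{\overline{B}z}$, $\widehat{\varphi}(z)=\overline{A}z+\overline{D}$. Since $\text{dom}(W_{\psi,\varphi,\max})$ contains $\text{Span}\{K_z:z\in\C\}$ it is densely defined, and it is closed by Proposition \ref{W-closed}; Theorem \ref{ad-2} gives $W_{\psi,\varphi,\max}^*=W_{\widehat{\psi},\widehat{\varphi},\max}$, while Proposition \ref{W-bar-CWC}(2) gives $\calC_{a,b,c}W_{\psi,\varphi,\max}\calC_{a,b,c}=W_{\widehat{\psi},\widehat{\varphi},\max}$, so the two coincide; conjugating $\calC_{a,b,c}W_{\psi,\varphi,\max}\calC_{a,b,c}=W_{\psi,\varphi,\max}^*$ by $\calC_{a,b,c}$ yields $W_{\psi,\varphi,\max}=\calC_{a,b,c}W_{\psi,\varphi,\max}^*\calC_{a,b,c}$, i.e. $\calC_{a,b,c}$-selfadjointness.

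The step that will need the most care is the bookkeeping of domains each time an unbounded operator is conjugated by $\calC_{a,b,c}$ — in particular verifying that the extension relation in (2) may legitimately be evaluated on the kernel functions, and that the operator equalities invoked from Theorem \ref{ad-2} and Proposition \ref{W-bar-CWC} really are equalities of domains and not merely of the action on a core. Beyond this routine (but essential) care, I anticipate no further obstacle.
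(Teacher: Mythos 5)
Your proposal is correct and follows essentially the same route as the paper: the cycle $(1)\Rightarrow(2)\Rightarrow(3)\Rightarrow(1)$, with $(2)\Rightarrow(3)$ reduced to Proposition \ref{c-sym-thm-nece} via the kernel functions and $(3)\Rightarrow(1)$ assembled from Proposition \ref{W-closed}, Theorem \ref{ad-2} and Proposition \ref{W-bar-CWC}. The only difference is that you spell out the domain bookkeeping for the steps the paper dismisses as clear, which is harmless.
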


\begin{proof}
It is clear that $(1)\Longrightarrow(2)$, while implication $(2)\Longrightarrow(3)$ follows from Proposition \ref{c-sym-thm-nece}.

It remains to verify $(3)\Longrightarrow(1)$. Indeed, suppose that assertion (3) holds. By Proposition \ref{W-closed}, the operator $W_{\psi,\varphi,\max}$ is closed. A direct computation shows that kernel functions belong to the domain $\text{dom}(W_{\psi,\varphi,\max})$. Furthermore, by Theorem \ref{ad-2} and Proposition \ref{W-bar-CWC}, we have
$$
W_{\psi,\varphi,\max}^*= W_{\widehat{\psi},\widehat{\varphi},\max}=\calC_{a,b,c}W_{\psi,\varphi,\max}\calC_{a,b,c}.
$$
\end{proof}

In comparison with the case of bounded operators, the unbounded case uses more complicated techniques concerning the domains as well as adjoint operators. In addition, the fact that ``a bounded operator which is complex symmetric on polynomials, is necessarily complex symmetric on the whole $\calF^2$'' is no longer true for the unbounded case.

In the following result, we show that there is no non-trivial domain for an unbounded weighted composition operator $W_{\psi,\varphi}$ on which $W_{\psi,\varphi}$ is $\calC_{a,b,c}$-selfadjoint.

\begin{thm}\label{abc-selfadjointness}
Let $W_{\psi,\varphi}$ be an unbounded weighted composition operator, induced by the symbols $\psi$, $\varphi$ with $\psi\not\equiv 0$. Furthermore, let $\calC_{a,b,c}$ be a weighted composition conjugation. Then the operator $W_{\psi,\varphi}$ is $\calC_{a,b,c}$-selfadjoint if and only if the following conditions hold.
\begin{enumerate}
\item $W_{\psi,\varphi}=W_{\psi,\varphi,\max}$.
\item The symbols are of forms \eqref{s-psi-2}, that is
\begin{equation*}
\varphi(z)=Az+B,\quad\psi(z) = Ce^{Dz},\ \hbox{with $C\ne 0,\ D=aB-bA+b$}.
\end{equation*}
\end{enumerate}
\end{thm}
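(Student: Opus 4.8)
The plan is to read off conditions (1) and (2) from $\calC_{a,b,c}$-selfadjointness using the two earlier necessary-condition results, and to obtain the converse directly from Theorem \ref{<->cs}. For the sufficiency, if (1) and (2) hold then $W_{\psi,\varphi}=W_{\psi,\varphi,\max}$ has symbols of the form \eqref{s-psi-2}, so the implication $(3)\Rightarrow(1)$ of Theorem \ref{<->cs} immediately gives that $W_{\psi,\varphi}$ is $\calC_{a,b,c}$-selfadjoint. Nothing further is needed here.

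For the necessity, suppose $W_{\psi,\varphi}$ is $\calC_{a,b,c}$-selfadjoint; by Definition \ref{defcso-new} it is closed and densely defined, so $W_{\psi,\varphi,\max}$ (which extends it) is densely defined as well, and Lemma \ref{W*Kz-prop}(1) applies to both operators, giving $W_{\psi,\varphi}^*K_z=\overline{\psi(z)}K_{\varphi(z)}=W_{\psi,\varphi,\max}^*K_z$ for all $z\in\C$. The first step is the elementary computation from \eqref{wcc-jormulas} that a conjugation $\calC_{a,b,c}$ sends each kernel function to a scalar multiple of a kernel function, namely $\calC_{a,b,c}K_z=ce^{bz}K_{\overline{az+b}}$. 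Since $W_{\psi,\varphi}=\calC_{a,b,c}W_{\psi,\varphi}^*\calC_{a,b,c}$, its domain is $\{f\in\calF^2:\calC_{a,b,c}f\in\text{dom}(W_{\psi,\varphi}^*)\}$; applied to $f=\calC_{a,b,c}K_z$, for which $\calC_{a,b,c}f=K_z\in\text{dom}(W_{\psi,\varphi}^*)$ by Lemma \ref{W*Kz-prop}(1), this shows $\calC_{a,b,c}K_z\in\text{dom}(W_{\psi,\varphi})$ for every $z$.

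Next, applying the identity $W_{\psi,\varphi}=\calC_{a,b,c}W_{\psi,\varphi}^*\calC_{a,b,c}$ to the vector $\calC_{a,b,c}K_z$ and using that $\calC_{a,b,c}$ is involutive yields $W_{\psi,\varphi}\calC_{a,b,c}K_z=\calC_{a,b,c}W_{\psi,\varphi}^*K_z$. Since $\calC_{a,b,c}K_z\in\text{dom}(W_{\psi,\varphi})$ and $W_{\psi,\varphi}$ is the restriction of $W_{\psi,\varphi,\max}$, the left side equals $W_{\psi,\varphi,\max}\calC_{a,b,c}K_z$, while on the right side $W_{\psi,\varphi}^*K_z=W_{\psi,\varphi,\max}^*K_z$; hence $\calC_{a,b,c}W_{\psi,\varphi,\max}^*K_z=W_{\psi,\varphi,\max}\calC_{a,b,c}K_z$ for all $z\in\C$, which is exactly the hypothesis of Proposition \ref{c-sym-thm-nece}. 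That proposition yields the symbol forms \eqref{s-psi-2}, i.e.\ condition (2). To get condition (1), note that with \eqref{s-psi-2} in force Theorem \ref{<->cs} says $W_{\psi,\varphi,\max}$ is itself $\calC_{a,b,c}$-selfadjoint, so $W_{\psi,\varphi,\max}=\calC_{a,b,c}W_{\psi,\varphi,\max}^*\calC_{a,b,c}$. Taking adjoints in $W_{\psi,\varphi}\preceq W_{\psi,\varphi,\max}$ reverses the extension to $W_{\psi,\varphi,\max}^*\preceq W_{\psi,\varphi}^*$, and conjugating by $\calC_{a,b,c}$ (which preserves $\preceq$) gives $W_{\psi,\varphi,\max}\preceq\calC_{a,b,c}W_{\psi,\varphi}^*\calC_{a,b,c}=W_{\psi,\varphi}$; together with $W_{\psi,\varphi}\preceq W_{\psi,\varphi,\max}$ this forces $W_{\psi,\varphi}=W_{\psi,\varphi,\max}$.

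The only delicate point I anticipate is the domain bookkeeping in the middle step: one must verify that the kernel functions genuinely lie in $\text{dom}(W_{\psi,\varphi})$, not merely in $\text{dom}(W_{\psi,\varphi,\max})$ — this is precisely what legitimizes replacing $W_{\psi,\varphi}\calC_{a,b,c}K_z$ by $W_{\psi,\varphi,\max}\calC_{a,b,c}K_z$ — and one must invoke Lemma \ref{W*Kz-prop}(1) for \emph{both} $W_{\psi,\varphi}$ and $W_{\psi,\varphi,\max}$ so that the two adjoints agree on kernels. Everything else reduces to a direct appeal to Proposition \ref{c-sym-thm-nece} and Theorem \ref{<->cs}.
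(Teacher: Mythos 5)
Your proof is correct and follows essentially the same route as the paper's: both reduce the necessity to verifying $\calC_{a,b,c}W_{\psi,\varphi,\max}^*K_z=W_{\psi,\varphi,\max}\calC_{a,b,c}K_z$ on kernel functions (you by evaluating $W_{\psi,\varphi}=\calC_{a,b,c}W_{\psi,\varphi}^*\calC_{a,b,c}$ at $\calC_{a,b,c}K_z$, the paper via the extension chain $\calC_{a,b,c}W_{\psi,\varphi,\max}^*\preceq W_{\psi,\varphi,\max}\calC_{a,b,c}$), then invoke Proposition \ref{c-sym-thm-nece} and Theorem \ref{<->cs}, and finish with the same extension sandwich to force $W_{\psi,\varphi}=W_{\psi,\varphi,\max}$. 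Your extra domain bookkeeping is accurate and only makes explicit what the paper's $\preceq$-chain encodes implicitly.
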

\begin{proof}
The sufficiency follows from Theorem \ref{<->cs}.

For the necessity, we suppose that $W_{\psi,\varphi}=\calC_{a,b,c}W_{\psi,\varphi}^*\calC_{a,b,c}$. First, we show that the operator $W_{\psi,\varphi,\max}$ is $\calC_{a,b,c}$-selfadjoint.

Since $W_{\psi,\varphi}\preceq W_{\psi,\varphi,\max}$, we have
$$
W_{\psi,\varphi,\max}^*\preceq W_{\psi,\varphi}^*=\calC_{a,b,c}W_{\psi,\varphi}\calC_{a,b,c}\preceq\calC_{a,b,c}W_{\psi,\varphi,\max}\calC_{a,b,c},
$$
which implies, due to the involutivity of $\calC_{a,b,c}$, that
$$
\calC_{a,b,c}W_{\psi,\varphi,\max}^*\preceq W_{\psi,\varphi,\max}\calC_{a,b,c}.
$$
Lemma \ref{W*Kz-prop} shows that kernel functions always belong to the domain $\text{dom}(\calC_{a,b,c}W_{\psi,\varphi,\max}^*)$, and so,
$$
\calC_{a,b,c}W_{\psi,\varphi,\max}^*K_z=W_{\psi,\varphi,\max}\calC_{a,b,c}K_z,\quad\forall z\in\C.
$$
By Proposition \ref{c-sym-thm-nece}, the symbols are of forms \eqref{s-psi-2}, and hence, by Theorem \ref{<->cs}, the operator $W_{\psi,\varphi,\max}$ is $\calC_{a,b,c}$-selfadjoint.

Thus, conclusion (1) follows from the following inclusions
$$
\calC_{a,b,c}W_{\psi,\varphi}\calC_{a,b,c}\preceq \calC_{a,b,c}W_{\psi,\varphi,\max}\calC_{a,b,c}=W_{\psi,\varphi,\max}^*\preceq W_{\psi,\varphi}^*=\calC_{a,b,c}W_{\psi,\varphi}\calC_{a,b,c}.
$$
\end{proof}

\section{Hermiticity}\label{s5}
Recall that a closed densely defined operator $T$ is said to be \emph{Hermitian} if $T=T^*$.  Cowen and Ko \cite{CK} found the exact structures when a weighted composition operator $W_{\psi,\varphi}$ is Hermitian on the Hardy space in the unit disk $\D$, under the additional assumption that  $\psi$ is bounded on $\D$. With the help of this assumption,  the operator $W_{\psi,\varphi}$ is certainly bounded on the Hardy space.

In this section, we investigate the Hermiticity of unbounded weighted composition operators acting on Fock space $\calF^2$. As in the previous section, we first consider maximal weighted composition operators and characterize these operators which are Hermitian. Then we use this characterization to show that the Hermiticity cannot be detached from the maximal domains.

A necessary condition for a maximal weighted composition operator to be Hermitian is provided by the following proposition.

\begin{prop}\label{hermitian-op-1}
Let $W_{\psi,\varphi,\max}$ be a maximal weighted composition operator induced by two entire functions $\psi$, $\varphi$. If the following identities
$$
W_{\psi,\varphi,\max}K_z=W_{\psi,\varphi,\max}^*K_z,\quad\forall z\in\C
$$
hold, then the symbols are of the following forms
\begin{equation}\label{s-psi-selfad}
\varphi(z)=Az+B,\quad\psi(z) = Ce^{\overline{B}z},\ \hbox{with $A\in\R,C\in\R\setminus\{0\}$, and $B\in\C$}.
\end{equation}
\end{prop}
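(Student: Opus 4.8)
The plan is to proceed as in the proof of Proposition~\ref{c-sym-thm-nece}: convert the operator identity into a scalar functional equation by evaluating at an arbitrary point $u\in\C$. The hypothesis presupposes $K_z\in\text{dom}(W_{\psi,\varphi,\max})$, and a direct computation gives $(W_{\psi,\varphi,\max}K_z)(u)=\psi(u)K_z(\varphi(u))=\psi(u)e^{\overline{z}\varphi(u)}$, while Lemma~\ref{W*Kz-prop}(1) gives $(W_{\psi,\varphi,\max}^*K_z)(u)=\overline{\psi(z)}\,K_{\varphi(z)}(u)=\overline{\psi(z)}\,e^{\overline{\varphi(z)}u}$. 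Thus the hypothesis is equivalent to
\begin{equation*}
\psi(u)\,e^{\overline{z}\varphi(u)}=\overline{\psi(z)}\,e^{\overline{\varphi(z)}u},\qquad\forall u,z\in\C.
\end{equation*}

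First I would read off the form of $\psi$. Setting $u=0$ gives $\psi(0)e^{\overline{z}\varphi(0)}=\overline{\psi(z)}$, i.e., after conjugating, $\psi(z)=\overline{\psi(0)}\,e^{\overline{\varphi(0)}z}$. Writing $B:=\varphi(0)$ and $C:=\overline{\psi(0)}$, the standing assumption $\psi\not\equiv0$ forces $C\neq0$, and $\psi(z)=Ce^{\overline{B}z}$. Substituting this into the functional equation turns it into $Ce^{\overline{B}u+\overline{z}\varphi(u)}=\overline{C}\,e^{B\overline{z}+\overline{\varphi(z)}u}$, and taking $u=z=0$ yields $C=\overline{C}$; hence $C\in\R\setminus\{0\}$.

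It remains to determine $\varphi$. Cancelling the nonzero real constant $C$ reduces the equation to $e^{F(u,z)}=1$ for all $u,z\in\C$, where $F(u,z):=\overline{B}u+\overline{z}\varphi(u)-B\overline{z}-\overline{\varphi(z)}u$. For each fixed $z$, the entire function $u\mapsto F(u,z)$ takes values in $2\pi i\Z$, hence is constant by connectedness of $\C$; since $F(0,z)=\overline{z}(\varphi(0)-B)=0$, we get $F\equiv0$, that is,
\begin{equation*}
\overline{z}\,(\varphi(u)-B)=u\,\overline{\varphi(z)-B},\qquad\forall u,z\in\C.
\end{equation*}
Fixing any $z_0\neq0$ gives $\varphi(u)-B=Au$ with $A:=\overline{(\varphi(z_0)-B)}/\overline{z_0}$ a constant, so $\varphi(z)=Az+B$; substituting back gives $A\overline{z}u=\overline{A}\,\overline{z}u$ for all $u,z$, whence $A=\overline{A}\in\R$. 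Combined with the previous step, this is exactly \eqref{s-psi-selfad}.

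The only delicate point is passing from the exponential identity to the linear identity of the exponents: the exponents mix holomorphic and antiholomorphic terms, so one cannot simply take a logarithm, and the cancellation of exponentials must be justified. This is what the connectedness argument above accomplishes, using that the relevant exponent vanishes at $u=0$; the remainder is routine substitution, with no domain or boundedness subtleties beyond the membership of kernel functions already built into the hypothesis and Lemma~\ref{W*Kz-prop}.
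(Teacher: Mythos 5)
Your proof is correct and follows essentially the same route as the paper's: evaluate both sides at an arbitrary $u$, use Lemma~\ref{W*Kz-prop}(1) to obtain the functional equation $\psi(u)e^{\varphi(u)\overline{z}}=\overline{\psi(z)}e^{u\overline{\varphi(z)}}$, specialize to read off $\psi$ and the reality of $C$, and then extract the affine form of $\varphi$ and $A\in\R$. The only difference is that you explicitly justify passing from the exponential identity to the identity of exponents via the discreteness/connectedness argument, a step the paper performs without comment; this is a worthwhile clarification rather than a divergence in method.
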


\begin{proof}
For any $u,z\in\C$, we have
$$
(W_{\psi,\varphi,\max}K_z)(u)=(W_{\psi,\varphi,\max}^*K_z)(u),
$$
which means, by Lemma \ref{W*Kz-prop}(1), that
\begin{equation}\label{psi-phi}
\psi(u)e^{\varphi(u)\overline{z}}=\overline{\psi(z)}e^{u\overline{\varphi(z)}}.
\end{equation}
In particular, for $z=0$ we get $\psi(u)=\overline{\psi(0)}e^{u\overline{\varphi(0)}}$, and then with $u=0$, we obtain $C=\psi(0)\in\R\setminus\{0\}$. Then identity \eqref{psi-phi} becomes
$$
\overline{\psi(0)}e^{u\overline{\varphi(0)}}e^{\varphi(u)\overline{z}}=\psi(0)e^{\overline{z}\varphi(0)}e^{u\overline{\varphi(z)}},
$$
which gives
$$
\varphi(u)\overline{z}+u\overline{\varphi(0)}=u\overline{\varphi(z)}+\overline{z}\varphi(0),
$$
and so
$$
\frac{\varphi(u)-\varphi(0)}{u}=A\in\R.
$$
Thus, $\varphi(z)=Az+B$ with $\varphi(0)=B\in\C$.
\end{proof}

It turns out that condition \eqref{s-psi-selfad} is also the sufficient condition for a maximal weighted composition operator to be Hermitian.

\begin{thm}[Hermitian criterion]\label{hermitian-op}
Let $W_{\psi,\varphi,\max}$ be a maximal weighted composition operator induced by two entire functions $\psi$, $\varphi$ with $\psi\not\equiv 0$. Then the following assertions are equivalent.
\begin{enumerate}
\item The operator $W_{\psi,\varphi,\max}$ is Hermitian.
\item The operator $W_{\psi,\varphi,\max}$ is densely defined and it satisfies $W_{\psi,\varphi,\max}^*\preceq W_{\psi,\varphi,\max}$.
\item The symbols are of forms \eqref{s-psi-selfad}, that is
$$
\varphi(z)=Az+B,\quad\psi(z) = Ce^{\overline{B}z},\ \hbox{with $A\in\R,C\in\R\setminus\{0\}$, and $B\in\C$}.
$$
\end{enumerate}
\end{thm}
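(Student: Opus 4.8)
The plan is to establish the cycle $(1)\Rightarrow(2)\Rightarrow(3)\Rightarrow(1)$, following the template of Theorem \ref{<->cs}. The implication $(1)\Rightarrow(2)$ is immediate: Hermiticity is by definition the equality $W_{\psi,\varphi,\max}=W_{\psi,\varphi,\max}^*$ of closed, densely defined operators, so in particular $W_{\psi,\varphi,\max}$ is densely defined and $W_{\psi,\varphi,\max}^*\preceq W_{\psi,\varphi,\max}$.

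For $(2)\Rightarrow(3)$, I would feed the hypothesis into Proposition \ref{hermitian-op-1}. By Lemma \ref{W*Kz-prop}(1), every reproducing kernel $K_z$ lies in $\text{dom}(W_{\psi,\varphi,\max}^*)$; the assumed inclusion $W_{\psi,\varphi,\max}^*\preceq W_{\psi,\varphi,\max}$ then places $K_z$ in $\text{dom}(W_{\psi,\varphi,\max})$ and yields $W_{\psi,\varphi,\max}K_z=W_{\psi,\varphi,\max}^*K_z$ for every $z\in\C$. This is exactly the hypothesis of Proposition \ref{hermitian-op-1}, which produces the forms in \eqref{s-psi-selfad}.

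For $(3)\Rightarrow(1)$, suppose $\varphi(z)=Az+B$ and $\psi(z)=Ce^{\overline{B}z}$ with $A\in\R$, $C\in\R\setminus\{0\}$, $B\in\C$. By Proposition \ref{W-closed} the operator $W_{\psi,\varphi,\max}$ is closed, and since (as in the proof of Theorem \ref{<->cs}) the kernel functions lie in its domain and span a dense subspace of $\calF^2$, it is densely defined. Now apply Theorem \ref{ad-2} with $D=\overline{B}$: the adjoint is $W_{\widehat{\psi},\widehat{\varphi},\max}$, where $\widehat{\psi}(z)=\overline{C}e^{\overline{B}z}$ and $\widehat{\varphi}(z)=\overline{A}z+\overline{D}=\overline{A}z+B$. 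Since $A$ and $C$ are real and $\overline{D}=B$, we obtain $\widehat{\psi}=\psi$ and $\widehat{\varphi}=\varphi$, hence $W_{\psi,\varphi,\max}^*=W_{\psi,\varphi,\max}$; that is, $W_{\psi,\varphi,\max}$ is Hermitian.

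The substance of the argument is carried by Theorem \ref{ad-2} and Proposition \ref{hermitian-op-1}; once those are available, the rest is bookkeeping with the reality conditions $A\in\R$, $C\in\R$, $D=\overline{B}$. The one point deserving care is that Theorem \ref{ad-2} furnishes an \emph{equality} of operators $W_{\psi,\varphi,\max}^*=W_{\widehat{\psi},\widehat{\varphi},\max}$ (matching domains, not merely an extension), which is precisely what is needed to conclude $T=T^*$ rather than only $T^*\preceq T$; I expect this domain-matching to be the only genuinely delicate ingredient, and it is already absorbed into the case analysis on $|A|$ carried out in Theorem \ref{ad-2}.
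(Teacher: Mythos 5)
Your proposal is correct and follows essentially the same route as the paper: $(1)\Rightarrow(2)$ is immediate, $(2)\Rightarrow(3)$ reduces to Proposition \ref{hermitian-op-1} via the action on reproducing kernels, and $(3)\Rightarrow(1)$ combines Proposition \ref{W-closed}, the density of kernel functions in the domain, and Theorem \ref{ad-2} with $D=\overline{B}$ to get $\widehat{\psi}=\psi$, $\widehat{\varphi}=\varphi$ and hence $W_{\psi,\varphi,\max}^*=W_{\psi,\varphi,\max}$. Your remark that the genuine equality (not mere extension) in Theorem \ref{ad-2} is the delicate point is exactly where the paper also places the burden.
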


\begin{proof}
It is clear that $(1)\Longrightarrow(2)$, while implication $(2)\Longrightarrow(3)$ follows from Proposition \ref{hermitian-op-1}. It remains to prove that $(3)\Longrightarrow(1)$. 

Indeed, suppose that assertion (3) holds. Note that by Proposition \ref{W-closed}, the operator $W_{\psi,\varphi,\max}$ is always closed. A direct computation shows that kernel functions belong to the domain $\text{dom}(W_{\psi,\varphi,\max})$. Furthermore, by Theorem \ref{ad-2}, the operator $W_{\psi,\varphi,\max}$ is Hermitian.
\end{proof}
Like as the the complex symmetry, we also discover that there is no non-trivial domain for an unbounded weighted composition operator $W_{\psi,\varphi}$ on which $W_{\psi,\varphi}$ is Hermitian.

\begin{thm}\label{self-arbitrary-domain}
Let $W_{\psi,\varphi}$ be an unbounded weighted composition operator induced by the symbols $\psi$, $\varphi$ with $\psi\not\equiv 0$. Then it is Hermitian if and only if the following conditions hold.
\begin{enumerate}
\item $W_{\psi,\varphi}=W_{\psi,\varphi,\max}$.
\item The symbols are of forms \eqref{s-psi-selfad}, that is
\begin{equation*}
\varphi(z)=Az+B,\quad\psi(z) = Ce^{\overline{B}z},\ \hbox{with $A\in\R, C\in\R\setminus\{0\}$, and $B\in\C$}.
\end{equation*}
\end{enumerate}
\end{thm}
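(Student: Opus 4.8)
The plan is to mirror exactly the argument used for Theorem~\ref{abc-selfadjointness}: reduce the arbitrary-domain statement to the maximal-domain criterion (Theorem~\ref{hermitian-op}) by a short sequence of adjoint inclusions, and then squeeze $W_{\psi,\varphi}$ between $W_{\psi,\varphi,\max}$ and its own adjoint. The sufficiency is immediate: if (1) and (2) hold, then $W_{\psi,\varphi}=W_{\psi,\varphi,\max}$ is Hermitian by the implication $(3)\Rightarrow(1)$ of Theorem~\ref{hermitian-op}. Hence all the work lies in the necessity.

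For the necessity, assume $W_{\psi,\varphi}=W_{\psi,\varphi}^*$. Being Hermitian, $W_{\psi,\varphi}$ is closed and densely defined, and since $W_{\psi,\varphi}\preceq W_{\psi,\varphi,\max}$, the maximal operator is densely defined as well, so $W_{\psi,\varphi,\max}^*$ is a well-defined closed operator. Passing to adjoints in $W_{\psi,\varphi}\preceq W_{\psi,\varphi,\max}$ reverses the inclusion, so
$$
W_{\psi,\varphi,\max}^*\preceq W_{\psi,\varphi}^*=W_{\psi,\varphi}\preceq W_{\psi,\varphi,\max},
$$
and therefore $W_{\psi,\varphi,\max}^*\preceq W_{\psi,\varphi,\max}$. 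The next step is to feed the reproducing kernels into this inclusion: by Lemma~\ref{W*Kz-prop}(1) every $K_z$ lies in $\text{dom}(W_{\psi,\varphi,\max}^*)$, so the inclusion forces
$$
W_{\psi,\varphi,\max}^*K_z=W_{\psi,\varphi,\max}K_z,\quad\forall z\in\C.
$$
Proposition~\ref{hermitian-op-1} then pins the symbols down to the form \eqref{s-psi-selfad}, which is precisely condition (2); and Theorem~\ref{hermitian-op}, implication $(3)\Rightarrow(1)$, upgrades this to the statement that $W_{\psi,\varphi,\max}$ is itself Hermitian, i.e. $W_{\psi,\varphi,\max}=W_{\psi,\varphi,\max}^*$.

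Finally, to obtain condition (1), I would chain the inclusions once more:
$$
W_{\psi,\varphi}\preceq W_{\psi,\varphi,\max}=W_{\psi,\varphi,\max}^*\preceq W_{\psi,\varphi}^*=W_{\psi,\varphi},
$$
so every inclusion in this chain is an equality; in particular $W_{\psi,\varphi}=W_{\psi,\varphi,\max}$, completing the necessity.

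As for the main obstacle, there is essentially no new analytic difficulty: the heavy lifting was already carried out in Proposition~\ref{hermitian-op-1} (the kernel computation forcing $\varphi$ affine and $\psi$ of exponential type) and in Theorem~\ref{ad-2}/Theorem~\ref{hermitian-op} (the explicit adjoint formula and the maximal-domain criterion). The only points requiring genuine care are bookkeeping ones: verifying that all adjoints in the chains are legitimately defined (which follows from density of $\text{dom}(W_{\psi,\varphi})$, itself a consequence of Hermiticity), and keeping the direction of the extension relation $\preceq$ straight when passing to adjoints, since the adjoint of a restriction is an \emph{extension} of the adjoint of the larger operator. The conceptual content is simply that Hermiticity is rigid: it cannot survive on a proper restriction of the maximal domain, because such a restriction would have an adjoint strictly extending $W_{\psi,\varphi,\max}^*=W_{\psi,\varphi,\max}$.
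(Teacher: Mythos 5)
Your proposal is correct and follows essentially the same route as the paper's own proof: sufficiency via Theorem~\ref{hermitian-op}, then for necessity the adjoint-reversal chain $W_{\psi,\varphi,\max}^*\preceq W_{\psi,\varphi}^*=W_{\psi,\varphi}\preceq W_{\psi,\varphi,\max}$, evaluation on the kernels $K_z$ to invoke Proposition~\ref{hermitian-op-1}, and the final squeeze $W_{\psi,\varphi}\preceq W_{\psi,\varphi,\max}=W_{\psi,\varphi,\max}^*\preceq W_{\psi,\varphi}^*=W_{\psi,\varphi}$. No gaps.
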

\begin{proof}
The sufficiency follows from Theorem \ref{hermitian-op}.

For the necessity, suppose that the operator $W_{\psi,\varphi}$ is Hermitian. First, we show that the operator $W_{\psi,\varphi,\max}$ is Hermitian.

Indeed, since $W_{\psi,\varphi}\preceq W_{\psi,\varphi,\max}$, by \cite[Proposition 1.6]{KS}, we have
$$
W_{\psi,\varphi,\max}^*\preceq W_{\psi,\varphi}^*=W_{\psi,\varphi}\preceq W_{\psi,\varphi,\max}.
$$
Lemma \ref{W*Kz-prop} shows that kernel functions always belong to the domain $\text{dom}(W_{\psi,\varphi,\max}^*)$, and so,
$$
W_{\psi,\varphi,\max}^*K_z(u)=W_{\psi,\varphi,\max}K_z(u),\quad\forall z,u\in\C.
$$
By Proposition \ref{hermitian-op-1}, the symbols are of forms \eqref{s-psi-selfad}, and hence, by Theorem \ref{hermitian-op}, the operator $W_{\psi,\varphi,\max}$ is Hermitian.

Thus, conclusion (1) follows from the following inclusions
$$
W_{\psi,\varphi}\preceq W_{\psi,\varphi,\max}=W_{\psi,\varphi,\max}^*\preceq W_{\psi,\varphi}^*=W_{\psi,\varphi}.
$$
\end{proof}

\section{Normality and cohyponormality}\label{s6}
Recall that a closed densely defined operator $T$ is called 
\begin{enumerate}
\item {\it normal} if $\text{dom}(T)=\text{dom}(T^*)$ and $\|Tx\|=\|T^*x\|,\ \forall x\in\text{dom}(T)$;
\item {\it cohyponormal} if $\text{dom}(T^*)\subseteq\text{dom}(T)$, $\|T^*x\|\geq\|Tx\|,\ \forall x\in\text{dom}(T^*)$.
\end{enumerate}
Note that a normal operator must be necessarily cohyponormal, but the inverse statement fails to holds. For a cohyponormal operator $T$, if $\lambda$ is an eigenvalue of the adjoint $T^*$, then $\overline{\lambda}$ is an eigenvalue of $T$.

The entire class of normal bounded weighted composition operators $W_{\psi,\varphi}$ on the Hardy space over $\D$ is still not well understood. Bourdon and Nayaran \cite{BN} characterized exactly the case when the symbol $\varphi$ has an interior fixed point. Later, Cowen, Jung and Ko \cite{CJK} discovered that when the symbol $\varphi$ has an interior fixed point, cohyponormality is equivalent to normality. These authors used the assumption that the symbol $\psi$ is bounded  on $\D$. The case when fixed points of $\varphi$ lie on the circle is difficult and remains unsolved completely. We refer the reader to the survey \cite{DT} for more details.

This situation on Fock space $\calF^2$ can be solved completely. Le \cite{TL} succeeded to characterize all \emph{bounded} normal weighted composition operators on $\calF^2$. It should be emphasized that his proof relies on the criteria (Proposition \ref{exaa}) for boundedness of weighted composition operators.  

In this section, we give complete descriptions of \emph{unbounded} weighted composition operators, which are cohyponormal as well as normal on $\calF^2$, with a different approach than that of Le.

The following technical lemma is needed in proving the sufficient conditions of the next two theorems.
\begin{lem}\label{dom-dom}
Let $\psi(z)=Ce^{Dz}$, $\varphi(z)=Az+B$, $\widehat{\psi}(z)=\overline{C}e^{\overline{B}z}$, and $\widehat{\varphi}(z)=\overline{A}z+\overline{D}$, where $A,B,C,D$ are complex constants, with $C\ne 0$. Then we have the following conclusions.

If $A\ne 0$ and $D=\frac{A\overline{B}-\overline{B}+D}{\overline{A}}$, then
\begin{equation*}
\int_\C|\widehat{\psi}(z)f(\widehat{\varphi}(z))|^2e^{-|z|^2}\;dV(z)=M\int_\C|\psi(z)f(\varphi(z))|^2e^{-|z|^2}\;dV(z),\quad\forall f\in\calF^2,
\end{equation*}
where
$$
M=e^{-\left|\frac{B-\overline{D}}{A}\right|^2+2\re\left(\frac{|B|^2-\overline{BD}}{\overline{A}}\right)}.
$$

In particular, 
\begin{enumerate}
\item if $A=1$, then $M=e^{|B|^2-|D|^2}$;
\item if $A\ne 1$ and $D=\overline{B}(1-A)(1-\overline{A})^{-1}$, then $M=1$.
\end{enumerate}
\end{lem}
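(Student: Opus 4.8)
The plan is to turn the left‑hand integral into a constant multiple of the right‑hand one by a single affine substitution of $\C$ that converts $\widehat\varphi$ into $\varphi$. First I would write both sides out explicitly: since $|\psi(z)|^2=|C|^2e^{2\re(Dz)}$ and $|\widehat\psi(z)|^2=|C|^2e^{2\re(\overline Bz)}$, the right‑hand integral equals $|C|^2\int_\C e^{2\re(Dz)-|z|^2}|f(Az+B)|^2\,dV(z)$ and the left‑hand one is the same with $(\overline B,\overline A,\overline D)$ in place of $(D,A,B)$. Both are well defined in $[0,\infty]$, so no integrability hypothesis on $f$ is needed to apply the real change‑of‑variables formula.

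Next I would substitute, in the left‑hand integral, $z=\lambda t+\mu$ with $\lambda:=A/\overline A$ and $\mu:=(B-\overline D)/\overline A$ (both defined since $A\ne0$). The choice is dictated by $\widehat\varphi(\lambda t+\mu)=\overline A\lambda t+\overline A\mu+\overline D=At+B=\varphi(t)$, so the argument of $f$ becomes $At+B$. Since $|\lambda|=1$, the real Jacobian of $z\mapsto\lambda z$ is $|\lambda|^2=1$, hence $dV(z)=dV(t)$, and, again using $|\lambda|=1$,
\[
2\re(\overline Bz)-|z|^2=\bigl(2\re(\overline B\mu)-|\mu|^2\bigr)+2\re\bigl((\overline B\lambda-\lambda\overline\mu)\,t\bigr)-|t|^2,
\]
so the constant term factors out of the integral.

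The crux is the identity $\overline B\lambda-\lambda\overline\mu=D$: with $\overline\mu=(\overline B-D)/A$ one gets $\overline B\lambda-\lambda\overline\mu=\lambda(\overline B-\overline\mu)=\frac{A}{\overline A}\cdot\frac{A\overline B-\overline B+D}{A}=\frac{A\overline B-\overline B+D}{\overline A}$, which equals $D$ exactly by the hypothesis $D=(A\overline B-\overline B+D)/\overline A$. Then the remaining integral is literally the right‑hand side, so the left‑hand side equals $e^{2\re(\overline B\mu)-|\mu|^2}$ times it, and it only remains to recognise the prefactor as $M$ via $|\mu|^2=|(B-\overline D)/A|^2$ and $2\re(\overline B\mu)=2\re\bigl((|B|^2-\overline B\,\overline D)/\overline A\bigr)=2\re\bigl((|B|^2-\overline{BD})/\overline A\bigr)$. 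The one delicate point is the bookkeeping of complex conjugates — choosing $\lambda,\mu$ correctly and verifying $\overline B\lambda-\lambda\overline\mu=D$; once that substitution is found, the rest is a routine Gaussian computation with a unit‑Jacobian change of variables.

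For the two special cases I would first record the cleaner form $\ln M=|B|^2-|D|^2$, valid under the hypothesis: from $D=\lambda(\overline B-\overline\mu)$ with $|\lambda|=1$ one has $|D|^2=|\overline B-\overline\mu|^2=|B-\mu|^2=|B|^2-2\re(\overline B\mu)+|\mu|^2$, i.e. $2\re(\overline B\mu)-|\mu|^2=|B|^2-|D|^2$. When $A=1$ this gives $M=e^{|B|^2-|D|^2}$ immediately (alternatively, set $\lambda=1$, $\mu=B-\overline D$ and use $\re(\overline B\,\overline D)=\re(BD)$). When $A\ne1$, rewriting the hypothesis as $(\overline A-1)D=(A-1)\overline B$ and taking moduli forces $|A-1|\,|D|=|A-1|\,|B|$ with $|A-1|\ne0$, so $|D|=|B|$ and hence $M=1$; note also that for $A\ne1$ the hypothesis is equivalent to $D=\overline B(1-A)(1-\overline A)^{-1}$, the form used in case~(2).
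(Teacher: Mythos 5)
Your proof is correct and follows essentially the same route as the paper's: your substitution $z=\lambda t+\mu$ with $\lambda=A/\overline A$ and $\mu=(B-\overline D)/\overline A$ is exactly the paper's change of variables $\overline A z+\overline D=Au+B$, and the exponent identity you verify is the one the paper records. Your unified observation that $\ln M=|B|^2-|D|^2$ under the hypothesis is a small improvement, making both special cases immediate, whereas the paper treats case (2) by a separate computation using $\re\bigl(A(1-\overline A)/(A-\overline A)\bigr)=\tfrac12$.
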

\begin{proof}
Let $f\in\calF^2$. By the explicit forms of $\widehat{\psi}$ and $\widehat{\varphi}$, the left-hand-side integral is rewritten as
$$
\int_\C|\widehat{\psi}(z)f(\widehat{\varphi}(z))|^2e^{-|z|^2}\;dV(z)=|C|^2\int_\C|f(\overline{A}z+\overline{D})|^2e^{-|z|^2+2\re(\overline{B}z)}\;dV(z).
$$
Doing the change of variables $\overline{A}z+\overline{D}=Au+B$, with the note that
$$\dfrac{A\overline{B}-\overline{B}+D}{\overline{A}}=D,$$
and also
$$
-|z|^2+2\re(\overline{B}z)=-|u|^2+2\re(Du)-\left|\frac{B-\overline{D}}{A}\right|^2+2\re\left(\frac{|B|^2-\overline{BD}}{\overline{A}}\right),
$$
the left-hand-side integral above is rewritten as
\begin{eqnarray*}
\int_\C|\widehat{\psi}(z)f(\widehat{\varphi}(z))|^2e^{-|z|^2}\;dV(z)%
&=& M\cdot\int_\C|Ce^{Du}f(Au+B)|^2e^{-|u|^2}\;dV(u)\\
&=& M\cdot\int_\C|\psi(z)f(\varphi(z))|^2e^{-|z|^2}\;dV(z).
\end{eqnarray*}

Furthermore, notice that when $A=1$, we have
$$
|B-\overline{D}|^2=|B|^2+|D|^2-2\re\overline{DB}=|D|^2-|B|^2+2\re(|B|^2-\overline{DB}),
$$
while for $A\ne 1$, since $\overline{D}=\frac{B(1-\overline{A})}{1-A}$, we have
$$
\frac{|B|^2-\overline{BD}}{\overline{A}}=\left|\frac{B-\overline{D}}{A}\right|^2\frac{A(1-\overline{A})}{A-\overline{A}},
$$
which implies, as $\re\left(\frac{A(1-\overline{A})}{A-\overline{A}}\right)=\frac{1}{2}$, that $M=1$.
\end{proof}

As in the previous sections, our first task is to characterize all maximal weighted composition operators, which are cohyponormal (Theorem \ref{cohypo-nor-equiv}) and normal (Theorem \ref{cohypo-nor-equiv-}).
\begin{thm}\label{cohypo-nor-equiv}
Let $W_{\psi,\varphi,\max}$ be a maximal weighted composition operator, induced by entire functions $\varphi, \psi$ with $\psi\not\equiv 0$. The following assertions are equivalent.
\begin{enumerate}
\item The operator $W_{\psi,\varphi,\max}$ is cohyponormal.
\item One of the following cases occurs:
\begin{enumerate}
\item $\varphi(z)=Az+B$, with $A\ne 1$, and $\psi(z)=Ce^{Dz}$, where
$$D=\overline{B}(1-A)(1-\overline{A})^{-1}.$$
\item $\varphi(z)=z+B$, and $\psi(z)=Ce^{Dz}$, where $|B|\geq |D|$.
\end{enumerate}
\end{enumerate}
\end{thm}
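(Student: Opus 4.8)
Write $W=W_{\psi,\varphi,\max}$. The plan is to prove $(1)\Leftrightarrow(2)$ directly, running the sub-case $A\neq1$ of (2a) and the sub-case $A=1$ of (2b) in parallel. For sufficiency, assume the symbols take one of the two forms. First I would dispose of the degenerate possibility $A=0$ inside (2a): then $\varphi\equiv B$ and $\psi=CK_B$, so $W$ is the rank-one operator $f\mapsto f(B)\psi$ with adjoint $g\mapsto\langle g,\psi\rangle K_B$, and a one-line check gives $\|Wg\|=\|W^*g\|$ for every $g\in\calF^2$, so $W$ is normal, hence cohyponormal. For $A\neq0$: since $\psi(z)=Ce^{Dz}$ and $\varphi(z)=Az+B$, Theorem \ref{ad-2} gives $W^*=W_{\widehat{\psi},\widehat{\varphi},\max}$ with $\widehat{\psi}(z)=\overline{C}e^{\overline{B}z}$, $\widehat{\varphi}(z)=\overline{A}z+\overline{D}$, and the constraint imposed on $D$ in each of (2a), (2b) is exactly the hypothesis $D=\overline{A}^{-1}(A\overline{B}-\overline{B}+D)$ of Lemma \ref{dom-dom} (automatic when $A=1$, equivalent to $D=\overline{B}(1-A)(1-\overline{A})^{-1}$ when $A\neq1$). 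Hence Lemma \ref{dom-dom} yields
$$\int_\C\abs{\widehat{\psi}(z)f(\widehat{\varphi}(z))}^2e^{-\abs{z}^2}\,dV(z)=M\int_\C\abs{\psi(z)f(\varphi(z))}^2e^{-\abs{z}^2}\,dV(z),\qquad f\in\calF^2,$$
with $M=1$ in case (2a) and $M=e^{\abs{B}^2-\abs{D}^2}\geq1$ in case (2b). Since $M>0$ the two integrals are finite together, so $\text{dom}(W^*)=\text{dom}(W)$, and on this common domain $\|W^*f\|^2=M\|Wf\|^2\geq\|Wf\|^2$, which is cohyponormality.

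\textbf{Necessity, the case $A\in\{0,1\}$.} Assume $W$ is cohyponormal. If $\varphi\equiv B$ is constant: when $\psi\notin\calF^2$ the maximal domain is the non-dense set $\{f:f(B)=0\}$, contradicting dense definedness; when $\psi\in\calF^2$ the operator is rank-one as above, and the inequality $\|W^*g\|\geq\|Wg\|$ at $g=K_B$, combined with the equality case of Cauchy--Schwarz, forces $\psi$ to be a scalar multiple of $K_B$, which is (2a) with $A=0$. When $\varphi$ is non-constant I would apply Theorem \ref{varphi-cohypo} with $S$ the identity operator: its hypotheses \eqref{SKz-dom} and \eqref{WS<W*} are precisely the inclusion $\text{dom}(W^*)\subseteq\text{dom}(W)$ and the bound $\|Wf\|\leq\|W^*f\|$ built into cohyponormality, and $1=K_0\in\text{dom}(W^*)\subseteq\text{dom}(W)$ gives $\psi=W1\in\calF^2$. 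Theorem \ref{varphi-cohypo} then forces $\varphi(z)=Az+B$ with $A\neq0$ and $\psi(z)=\psi(0)e^{\alpha z^2+\beta z}$, $\abs{\alpha}<1/2$, $\psi(0)\neq0$; and when $A=1$, part (3) of that theorem further gives $\alpha=0$ and $\abs{B}\geq\abs{\beta}$, i.e.\ case (2b).

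\textbf{Necessity, the case $A\notin\{0,1\}$.} It remains to show $\alpha=0$ and $\beta=\overline{B}(1-A)(1-\overline{A})^{-1}$, using again the conditions extracted by testing cohyponormality on kernel functions. By Lemma \ref{W*Kz-prop}, $\|W^*K_z\|^2=\abs{\psi(0)}^2e^{2\re(\alpha z^2+\beta z)+\abs{Az+B}^2}$, while $WK_z(u)=\psi(0)e^{\overline{z}B}e^{\alpha u^2+(\beta+A\overline{z})u}$; so by the elementary Fock-space identity
$$\|e^{\alpha u^2+\gamma u}\|^2=\frac{1}{\sqrt{1-4\abs{\alpha}^2}}\exp\!\left(\frac{\abs{\gamma}^2+2\re(\overline{\alpha}\gamma^2)}{1-4\abs{\alpha}^2}\right)\qquad(\abs{\alpha}<1/2),$$
a Gaussian integral, $\|WK_z\|^2$ has a closed form. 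Both $\log\|W^*K_z\|^2$ and $\log\|WK_z\|^2$ are real polynomials of degree $\leq2$ in $z,\overline{z}$, so the inequality $\|W^*K_z\|\geq\|WK_z\|$ on all of $\C$ forces their degree-two homogeneous parts to satisfy the same inequality; since the $\abs{z}^2$-coefficient of the difference works out to $-4\abs{\alpha}^2\abs{A}^2(1-4\abs{\alpha}^2)^{-1}$ and the $\abs{z}^2$-coefficient of a nonnegative Hermitian form is $\geq0$, it must vanish, and $A\neq0$ gives $\alpha=0$. With $\alpha=0$, $\psi(z)=\psi(0)e^{\beta z}$ and the same comparison --- now with $\|e^{\gamma u}\|^2=e^{\abs{\gamma}^2}$ --- reduces to $2\re\big((\beta+A\overline{B}-\overline{B}-\beta\overline{A})z\big)\geq\abs{\beta}^2-\abs{B}^2$ for all $z\in\C$; being real-linear in $z$ this is bounded below only if $\beta+A\overline{B}-\overline{B}-\beta\overline{A}=0$, i.e.\ $\beta=\overline{B}(1-A)(1-\overline{A})^{-1}$ (here $A\neq1$), which is case (2a); one notes $\abs{\beta}=\abs{B}$, so the residual inequality $0\geq\abs{\beta}^2-\abs{B}^2$ holds automatically.

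\textbf{The main obstacle.} The one genuinely non-formal step is the elimination of the quadratic term of $\psi$ when $A\neq1$: Theorem \ref{varphi-cohypo} only delivers $\psi=\psi(0)e^{\alpha z^2+\beta z}$, and Theorem \ref{ad-2}, hence Lemma \ref{dom-dom}, cannot be invoked until $\alpha=0$ is known --- so one cannot route the necessity through the adjoint formula directly. The asymptotic comparison of $\|W^*K_z\|$ and $\|WK_z\|$ as $\abs{z}\to\infty$, resting on the explicit Fock-space norm of a Gaussian exponential, is where the only real computation lies; everything else is bookkeeping with the adjoint formula and the change of variables of Lemma \ref{dom-dom}.
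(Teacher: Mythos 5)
Your proof is correct, but the heart of the necessity argument runs along a genuinely different track from the paper's. Both proofs share the same skeleton for sufficiency (Theorem \ref{ad-2} plus the change-of-variables identity of Lemma \ref{dom-dom}, with $M=1$ in case (2a) and $M=e^{|B|^2-|D|^2}\ge 1$ in case (2b)) and both get the affine form of $\varphi$ and the whole $A=1$ case from Theorem \ref{varphi-cohypo} with $S=I$. The divergence is in the case $A\notin\{0,1\}$: the paper exploits the fixed point $d=B/(1-A)$ of $\varphi$, notes that $K_d$ is an eigenvector of $W^*$ with eigenvalue $\overline{\psi(d)}$, and uses the fact that cohyponormality of $W-\overline{\psi(d)}$ forces $WK_d=\psi(d)K_d$; the functional equation $\psi(z)e^{\overline{d}\varphi(z)}=\psi(d)e^{\overline{d}z}$ then hands over the exact form $\psi(z)=Ce^{Dz}$ with $D=(1-A)\overline{d}=\overline{B}(1-A)(1-\overline{A})^{-1}$ in two lines, with no need to know a priori that $\psi$ is an exponential of a polynomial. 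You instead first extract $\psi=\psi(0)e^{\alpha z^2+\beta z}$ from Theorem \ref{varphi-cohypo}(1) and then compare $\|WK_z\|$ with $\|W^*K_z\|$ via the explicit Gaussian norm of $e^{\alpha u^2+\gamma u}$, killing $\alpha$ by inspecting the $|z|^2$-coefficient and pinning down $\beta$ from the residual linear inequality; I checked the coefficient $-4|\alpha|^2|A|^2(1-4|\alpha|^2)^{-1}$ and the reduction $2\re\big((\beta+A\overline{B}-\overline{B}-\beta\overline{A})z\big)\ge|\beta|^2-|B|^2$, and both are right. Your route costs one nontrivial Gaussian integral but avoids the eigenvector-transfer principle for unbounded cohyponormal operators, which the paper invokes without proof. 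A further point in your favour: you treat the degenerate cases explicitly --- the constant-$\varphi$ case in the necessity direction (where Theorem \ref{varphi-cohypo} is not applicable and dense definedness, respectively a Cauchy--Schwarz equality argument, settles matters) and the $A=0$ case in the sufficiency direction (a direct rank-one computation in place of the paper's citation of Le's boundedness-based result). The paper's appeal to Theorem \ref{varphi-cohypo}(2) silently presumes $\varphi$ non-constant, so your extra care closes a small gap rather than creating one.
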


\begin{proof}
$\bullet$ We prove implication $(1)\Longrightarrow(2)$. Indeed, suppose that the operator $W_{\psi,\varphi,\max}$ is cohyponormal, and hence, by Theorem \ref{varphi-cohypo}(2) (for $S=I$, the identity operator), $\varphi(z)=Az+B$. There are two possibilities for $A$.

If $A=1$, then again by Theorem \ref{varphi-cohypo}(3), we immediately obtain (2b).

If $A\ne 1$, then for $d=B/(1-A)$, a fixed point of $\varphi$, Lemma \ref{W*Kz-prop}(1) gives $W_{\psi,\varphi,\max}^*K_d=\overline{\psi(d)}K_{\varphi(d)}=\overline{\psi(d)}K_d$. Thus, $K_d$ is an eigenvector of $W_{\psi,\varphi,\max}^*$ corresponding to the eigenvalue $\overline{\psi(d)}$.

This notice allows us to get $W_{\psi,\varphi,\max}K_d=\psi(d)K_d$. Consequently, taking into account the structure of the operator $W_{\psi,\varphi,\max}$, we get $\psi(z)K_d(\varphi(z))=\psi(d)K_d(z)$, which gives
$$
\psi(z)=\psi(d)e^{(1-A)\overline{d}z-B\overline{d}},\ \forall z\in\C.
$$
In particular, for $z=0$, we obtain $\psi(0)=\psi(d)e^{-B\overline{d}}$ and hence (2a) follows.

$\bullet$ We prove the inverse implication $(2)\Longrightarrow(1)$. Suppose that the functions $\varphi,\psi$ are of forms (2a-2b). We apply Theorem \ref{ad-2} to get $W_{\psi,\varphi,\max}^*=W_{\widehat{\psi},\widehat{\varphi},\max}$, where $\widehat{\varphi}(z)=\overline{A}z+\overline{D}$, $\widehat{\psi}(z)=\overline{C}e^{z\overline{B}}$. So, to get assertion (1), we have to show that
$$
\text{dom}(W_{\widehat{\psi},\widehat{\varphi},\max})\subseteq\text{dom}(W_{\psi,\varphi,\max}),\quad \|W_{\widehat{\psi},\widehat{\varphi},\max}f\|\geq\|W_{\psi,\varphi,\max}f\|,\,\forall f\in\text{dom}(W_{\widehat{\psi},\widehat{\varphi},\max}).
$$

If $A=0$, then by Proposition \ref{exaa}, the operator $W_{\psi,\varphi,\max}$ is bounded on $\calF^2$, and hence, by \cite[Theorem 3.3]{TL}, it must be normal.

Now consider the case $A\ne 0$. Let $g\in \text{dom}(W_{\widehat{\psi},\widehat{\varphi},\max})$, that is $\|W_{\widehat{\psi},\widehat{\varphi},\max}g\|<\infty$. By Lemma \ref{dom-dom}, we have $\|W_{\widehat{\psi},\widehat{\varphi},\max}g\|\geq\|W_{\psi,\varphi,\max}g\|$, which gives $g\in\text{dom}(W_{\psi,\varphi,\max})$. The proof of the theorem is complete.
\end{proof}

\begin{thm}\label{cohypo-nor-equiv-}
Let $W_{\psi,\varphi,\max}$ be a maximal weighted composition operator induced by entire functions $\varphi, \psi$ with $\psi\not\equiv 0$. The following assertions are equivalent.
\begin{enumerate}
\item The operator $W_{\psi,\varphi,\max}$ is normal.
\item One of the following cases occurs:
\begin{enumerate}
\item $\varphi(z)=Az+B$, with $A\ne 1$, and $\psi(z)=Ce^{Dz}$, where
$$D=\overline{B}(1-A)(1-\overline{A})^{-1}.$$
\item $\varphi(z)=z+B$, and $\psi(z)=Ce^{Dz}$, where $|B|=|D|$.
\end{enumerate}
\end{enumerate}
\end{thm}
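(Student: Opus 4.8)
The plan is to read the characterization off the cohyponormal criterion (Theorem~\ref{cohypo-nor-equiv}), the adjoint formula (Theorem~\ref{ad-2}), and the norm identity (Lemma~\ref{dom-dom}), using that a normal operator is automatically cohyponormal.

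\emph{The direction $(1)\Rightarrow(2)$.} If $W_{\psi,\varphi,\max}$ is normal then it is cohyponormal, so Theorem~\ref{cohypo-nor-equiv} places the symbols in one of the two families listed there. The first family is already the desired case (2a), so only the second --- $\varphi(z)=z+B$ and $\psi(z)=Ce^{Dz}$ with $|B|\ge|D|$ --- needs refinement. Here $A=1$, so by Theorem~\ref{ad-2} we have $W_{\psi,\varphi,\max}^{*}=W_{\widehat\psi,\widehat\varphi,\max}$ with $\widehat\varphi(z)=z+\overline D$, $\widehat\psi(z)=\overline C e^{\overline B z}$, and the standing hypothesis of Lemma~\ref{dom-dom} holds trivially; hence Lemma~\ref{dom-dom}(1) gives $\|W_{\psi,\varphi,\max}^{*}f\|^{2}=e^{|B|^{2}-|D|^{2}}\|W_{\psi,\varphi,\max}f\|^{2}$ for every $f\in\calF^{2}$. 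Testing on $f=K_{0}$, which lies in $\text{dom}(W_{\psi,\varphi,\max})$ with $W_{\psi,\varphi,\max}K_{0}=\psi$ a nonzero member of $\calF^{2}$ by Lemma~\ref{KHI-lem}, the identity $\|W_{\psi,\varphi,\max}^{*}K_{0}\|=\|W_{\psi,\varphi,\max}K_{0}\|$ forces $e^{|B|^{2}-|D|^{2}}=1$, i.e.\ $|B|=|D|$.

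\emph{The direction $(2)\Rightarrow(1)$.} In either case $W_{\psi,\varphi,\max}$ is closed (Proposition~\ref{W-closed}) and densely defined (its domain contains every kernel function $K_{z}$, as noted in the proof of Theorem~\ref{ad-2}), and Theorem~\ref{ad-2} identifies its adjoint as $W_{\widehat\psi,\widehat\varphi,\max}$. Thus it will suffice to prove that the constant $M$ of Lemma~\ref{dom-dom} equals $1$: the integral identity there then shows simultaneously, via Remark~\ref{f-in-dom}, that $\text{dom}(W_{\widehat\psi,\widehat\varphi,\max})=\text{dom}(W_{\psi,\varphi,\max})$ and that $\|W_{\psi,\varphi,\max}^{*}f\|=\|W_{\psi,\varphi,\max}f\|$ on this common domain, which is exactly normality. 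For case (2b) with $|B|=|D|$ one has $A=1$, so Lemma~\ref{dom-dom}(1) gives $M=e^{|B|^{2}-|D|^{2}}=1$. For case (2a) with $A\ne0$, a one-line manipulation rewrites $D=\overline B(1-A)(1-\overline A)^{-1}$ as $\overline A D=A\overline B-\overline B+D$, which is precisely the standing hypothesis of Lemma~\ref{dom-dom}, so its part (2) applies and yields $M=1$. The degenerate subcase of (2a) with $A=0$ falls outside Lemma~\ref{dom-dom} but is elementary: then $D=\overline B$, and $W_{\psi,\varphi,\max}$ is the bounded rank-one operator $f\mapsto C\langle f,K_{B}\rangle K_{B}$, which is normal (one may also appeal to the bounded classification \cite[Theorem~3.3]{TL}).

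I expect the only real difficulty to be organizational: checking that the relation imposed on $D$ in case (2a) coincides with the hypothesis of Lemma~\ref{dom-dom}, correctly selecting between its two special values of $M$, and disposing separately of the constant-symbol case $\varphi\equiv B$. No analytic estimates beyond those already proved in Theorems~\ref{cohypo-nor-equiv} and~\ref{ad-2} and Lemma~\ref{dom-dom} should be needed.
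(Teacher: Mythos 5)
Your proof is correct and follows essentially the same route as the paper's: reduce to the cohyponormality criterion of Theorem~\ref{cohypo-nor-equiv}, sharpen the $A=1$ case to $|B|=|D|$ by comparing $\|W_{\psi,\varphi,\max}K_0\|$ with $\|W_{\psi,\varphi,\max}^*K_0\|$ (the paper does this by a direct kernel-norm computation, you via Lemma~\ref{dom-dom}(1) --- the same calculation), and obtain the converse from Theorem~\ref{ad-2} together with Lemma~\ref{dom-dom} with $M=1$. If anything you are more explicit than the paper, which compresses the converse into ``arguments similar to those used in the proof of Theorem~\ref{cohypo-nor-equiv}''; your verification that the relation on $D$ in case (2a) is exactly the standing hypothesis of Lemma~\ref{dom-dom} and your separate disposal of the degenerate case $A=0$ are both correct.
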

\begin{proof}
$\bullet$ We prove implication $(1)\Longrightarrow(2)$. Suppose that the operator $W_{\psi,\varphi,\max}$ is normal, and hence, by Theorem \ref{cohypo-nor-equiv}, $\varphi(z)=Az+B$, $\psi(z)=Ce^{Dz}$, where
\begin{equation*}
D=
\begin{cases}
\overline{B}(1-A)(1-\overline{A})^{-1},\quad\text{if $A\ne 1$},\\
\text{a constant with $|D|\leq |B|$,\quad if $A=1$}.
\end{cases}
\end{equation*}
It remains to show that $|D|=|B|$ in the case when $A=1$. 

Indeed, in this case, a direct computation shows that $W_{\psi,\varphi,\max}K_z=Ce^{B\overline{z}}K_{z+\overline{D}}$, and hence,
$$
\|W_{\psi,\varphi,\max}K_z\|^2=|C|^2e^{2\re(z\overline{B})+|z+\overline{D}|^2}=|C|^2e^{|z|^2+2\re[z(\overline{B}+D)]+|D|^2}.
$$
On the other hand, by Lemma \ref{W*Kz-prop}, we have $W_{\psi,\varphi,\max}^*K_z=\overline{Ce^{Dz}}K_{z+B}$, which gives
$$
\|W_{\psi,\varphi,\max}^*K_z\|^2=|C|^2e^{2\re(Dz)+|z+B|^2}=|C|^2e^{|z|^2+2\re[z(\overline{B}+D)]+|B|^2}.
$$
Since the operator $W_{\psi,\varphi,\max}$ is normal, we must have $\|W_{\psi,\varphi,\max}K_z\|=\|W_{\psi,\varphi,\max}^*K_z\|$, or equivalently
$$
|C|^2e^{|z|^2+2\re[z(\overline{B}+D)]+|D|^2}=|C|^2e^{|z|^2+2\re[z(\overline{B}+D)]+|B|^2},\quad\forall z\in\C.
$$
In particular with $z=0$, we get $|D|=|B|$.

$\bullet$ We prove the inverse implication $(2)\Longrightarrow(1)$. Suppose that the functions $\varphi,\psi$ are of forms (2a-2b). Note that $W_{\psi,\varphi,\max}^*=W_{\widehat{\psi},\widehat{\varphi},\max}$, where $\widehat{\varphi}(z)=\overline{A}z+\overline{D}$, $\widehat{\psi}(z)=\overline{C}e^{z\overline{B}}$. By arguments similar to those used in the proof of Theorem \ref{cohypo-nor-equiv}, we can show that
$$
\text{dom}(W_{\widehat{\psi},\widehat{\varphi},\max})\subseteq\text{dom}(W_{\psi,\varphi,\max}),\quad \|W_{\widehat{\psi},\widehat{\varphi},\max}f\|\geq\|W_{\psi,\varphi,\max}f\|,\,\forall f\in\text{dom}(W_{\widehat{\psi},\widehat{\varphi},\max}).
$$

Let $h\in\text{dom}(W_{\psi,\varphi,\max})$. Then by Lemma \ref{dom-dom}, $h\in\text{dom}(W_{\widehat{\psi},\widehat{\varphi},\max})$. Thus $\text{dom}(W_{\psi,\varphi,\max})=\text{dom}(W_{\widehat{\psi},\widehat{\varphi},\max})$, and the proof of the theorem is complete.
\end{proof}

\begin{rem}
Comparing \cite[Theorem 3.3]{TL} and our Theorem \ref{cohypo-nor-equiv-}, it can be seen an essential difference between bounded and unbounded cases; namely, if $A=1$, then for the first case the constant $D$ is exactly $-\overline{B}$, while the second case only requires $|D|=|B|$. This difference is due to the use of Proposition \ref{exaa}. Thus, Theorem \ref{cohypo-nor-equiv-} contains the corresponding result in \cite{TL} as a particular case.
\end{rem}

It turns out that unbounded normal weighted composition operators on Fock space $\calF^2$ are contained properly in the $\calC_{a,b,c}$-selfadjoint class. This remark is demonstrated in the below corollary.

\begin{cor}\label{cohypo-nor-equiv-cor}
Let $\psi$ and $\varphi$ be two entire functions such that $W_{\psi,\varphi,\max}$ is normal on $\calF^2$ (that is, the symbols satisfy Theorem \ref{cohypo-nor-equiv-}(2)). Then the operator $W_{\psi,\varphi,\max}$ is $\calC$-selfadjoint with respect to the weighted composition conjugation $\calC_{a,b,c}$ given by
\begin{equation}\label{choice-normal}
b=0,\quad c=1,\quad a=
\begin{cases}
\overline{B}(1-A)B^{-1}(1-\overline{A})^{-1},\quad\text{if $B\ne 0, A\ne 1$},\\
DB^{-1},\quad\text{if $B\ne 0,A=1$},\\
1,\quad\text{if $B=0$}.
\end{cases}
\end{equation}
\end{cor}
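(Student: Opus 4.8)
The plan is to verify directly that for the weighted composition conjugation $\calC_{a,b,c}$ with the parameters prescribed in \eqref{choice-normal}, the symbols $\psi,\varphi$ of a normal maximal weighted composition operator satisfy the $\calC_{a,b,c}$-selfadjoint criterion of Theorem \ref{<->cs}, namely the relation \eqref{s-psi-2}: $\varphi(z)=Az+B$, $\psi(z)=Ce^{Dz}$ with $D=aB-bA+b$. So the whole corollary reduces to two things: (i) checking that the three choices of $(a,b,c)$ in \eqref{choice-normal} actually define weighted composition conjugations, i.e. satisfy \eqref{abc-cond}; and (ii) checking the identity $D = aB - bA + b$ in each of the three cases.

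First I would dispose of (i). Since $b=0$ and $c=1$ in all three cases, the conditions \eqref{abc-cond} collapse to $|a|=1$ (the second condition $\bar a b + \bar b = 0$ and the third $|c|^2 e^{|b|^2}=1$ become trivial). So I only need $|a|=1$. In the case $B=0$ this is immediate since $a=1$. In the case $B\ne 0,\ A=1$: by Theorem \ref{cohypo-nor-equiv-}(2b) normality forces $|D|=|B|$, hence $|a| = |D|/|B| = 1$. In the case $B\ne 0,\ A\ne 1$: here $a = \overline{B}(1-A)\,B^{-1}(1-\overline{A})^{-1}$, and one computes $|a| = \frac{|B|\,|1-A|}{|B|\,|1-\overline{A}|} = 1$ since $|1-A| = |1-\overline{A}|$. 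So in every case $\calC_{a,b,c}$ is a genuine weighted composition conjugation.

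Next, (ii): with $b=0$ the target relation \eqref{s-psi-2} is just $D = aB$. By Theorem \ref{cohypo-nor-equiv-}, $\psi(z)=Ce^{Dz}$ with $C\ne 0$ and $\varphi(z)=Az+B$, so $\psi$ is already of exponential form with $C\ne 0$; it remains only to match the exponent. If $B=0$, then normality (case 2a with $B=0$ forces $D=\overline B(1-A)(1-\overline A)^{-1}=0$, and case 2b with $B=0$ forces $|D|=|B|=0$, so $D=0$) gives $D=0=a\cdot 0=aB$. If $B\ne 0$ and $A=1$, then $a=DB^{-1}$ gives $aB=D$ trivially. If $B\ne 0$ and $A\ne 1$, then by Theorem \ref{cohypo-nor-equiv-}(2a), $D=\overline{B}(1-A)(1-\overline{A})^{-1}$, so $aB = \overline{B}(1-A)B^{-1}(1-\overline{A})^{-1}\cdot B = \overline{B}(1-A)(1-\overline{A})^{-1} = D$. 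Hence $D=aB$ holds in all cases, so \eqref{s-psi-2} is satisfied.

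Finally, having verified that $\psi,\varphi$ satisfy condition (3) of Theorem \ref{<->cs} for this particular conjugation $\calC_{a,b,c}$, the equivalence $(3)\Longrightarrow(1)$ of that theorem yields that $W_{\psi,\varphi,\max}$ is $\calC_{a,b,c}$-selfadjoint, which is exactly the assertion of the corollary. There is no real obstacle here --- the statement is essentially a bookkeeping consequence of Theorems \ref{cohypo-nor-equiv-} and \ref{<->cs} --- the only mild care needed is the case split on whether $B=0$ and whether $A=1$, and the observation that in the borderline case $A=1$ one must invoke the normality refinement $|D|=|B|$ (not merely $|D|\le|B|$) to get $|a|=1$.
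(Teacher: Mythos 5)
Your proposal is correct and follows exactly the paper's route: the paper's own proof simply asserts that the parameters \eqref{choice-normal} satisfy \eqref{abc-cond} and that \eqref{s-psi-2} holds, then invokes Theorem \ref{<->cs}, which is precisely what you verify in detail (including the case split and the observation that $|a|=1$ in the $A=1$ case needs the normality condition $|D|=|B|$). No discrepancies.
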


\begin{proof}
The complex numbers $a,b,c$ given by \eqref{choice-normal} satisfy condition \eqref{abc-cond}, and hence, the operator $\calC_{a,b,c}$ is a conjugation. Moreover, condition \eqref{s-psi-2} is satisfied, and so by Theorem \ref{<->cs}, the operator $W_{\psi,\varphi,\max}$ is $\calC_{a,b,c}$-symmetric.
\end{proof}

We are ready to discover that that normality and cohyponormality cannot be separated from maximal domains. In other words, a normal (or cohyponormal) weighted composition operator must necessarily be maximal.
\begin{thm}\label{cohypo-nor-equiv--}
Let $W_{\psi, \varphi}$ be an unbounded weighted composition operator on $\calF^2$, induced by entire functions $\varphi, \psi$ with $\psi\not\equiv 0$. The following assertions are equivalent.
\begin{enumerate}
\item The operator $W_{\psi, \varphi}$ is cohyponormal.
\item The operator $W_{\psi, \varphi}$ satisfies
\begin{enumerate}
\item $W_{\psi, \varphi}=W_{\psi, \varphi,\max}$.
\item One of the following cases occurs:
\begin{enumerate}
\item $\varphi(z)=Az+B$, with $A\ne 1$, and $\psi(z)=Ce^{Dz}$, where
$$D=\overline{B}(1-A)(1-\overline{A})^{-1}.$$
\item $\varphi(z)=z+B$, and $\psi(z)=Ce^{Dz}$, where $|B|\geq |D|$.
\end{enumerate}
\end{enumerate}
\end{enumerate}
\end{thm}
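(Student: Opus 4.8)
The plan is to handle the two implications of the equivalence separately. The implication $(2)\Rightarrow(1)$ is immediate: once we know $W_{\psi,\varphi}=W_{\psi,\varphi,\max}$ and that the symbols are of one of the forms in (2b), Theorem \ref{cohypo-nor-equiv} says $W_{\psi,\varphi,\max}$ is cohyponormal, hence so is $W_{\psi,\varphi}$. All the substance is in the necessity $(1)\Rightarrow(2)$, which I would carry out in two stages: first pin down the symbols, then upgrade the domain to the maximal one.

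For the first stage, note that a cohyponormal operator is by convention densely defined and closed, and since $W_{\psi,\varphi}$ is unbounded the symbol $\varphi$ must be non-constant (a constant $\varphi$ would make $W_{\psi,\varphi,\max}$, and hence $W_{\psi,\varphi}$, bounded). I would then apply Theorem \ref{varphi-cohypo} with $S=I$: the cohyponormality conditions $\text{dom}(W_{\psi,\varphi}^*)\subseteq\text{dom}(W_{\psi,\varphi})$ and $\|W_{\psi,\varphi}f\|\le\|W_{\psi,\varphi}^*f\|$ on $\text{dom}(W_{\psi,\varphi}^*)$ are exactly hypotheses \eqref{SKz-dom}--\eqref{WS<W*} for $S=I$. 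This gives that $\psi$ is nowhere zero and $\varphi(z)=Az+B$ with $A\ne0$. Since $1=K_0\in\text{dom}(W_{\psi,\varphi}^*)\subseteq\text{dom}(W_{\psi,\varphi})$ by Lemma \ref{W*Kz-prop}(1), we get $\psi=W_{\psi,\varphi}1\in\calF^2$, so Theorem \ref{varphi-cohypo}(1) forces $\psi(z)=\psi(0)e^{Cz^2+Dz}$ with $|C|<1/2$. If $A=1$, Theorem \ref{varphi-cohypo}(3) refines this to $\psi(z)=Ce^{Dz}$ with $|B|\ge|D|$, which is case (2b)(ii). If $A\ne1$, I would use the fixed point $d=B/(1-A)$: by Lemma \ref{W*Kz-prop}(1), $K_d$ is an eigenvector of $W_{\psi,\varphi}^*$ with eigenvalue $\overline{\psi(d)}$, and the cohyponormal eigenvalue argument used in the proof of Theorem \ref{cohypo-nor-equiv} (if $W^*x=\lambda x$ then $Wx=\overline\lambda x$) yields $W_{\psi,\varphi}K_d=\psi(d)K_d$; writing out $\psi(z)e^{\overline d(Az+B)}=\psi(d)e^{\overline dz}$ gives $\psi(z)=Ce^{Dz}$ with $D=(1-A)\overline d=\overline B(1-A)(1-\overline A)^{-1}$, i.e. case (2b)(i).

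For the second stage, the symbols now satisfy the hypotheses of Theorem \ref{ad-2}, so $W_{\psi,\varphi,\max}^*=W_{\widehat\psi,\widehat\varphi,\max}$ with $\widehat\psi(z)=\overline Ce^{\overline Bz}$, $\widehat\varphi(z)=\overline Az+\overline D$. Because cohyponormality puts every $K_z$ into $\text{dom}(W_{\psi,\varphi})$ and there $W_{\psi,\varphi}K_z=W_{\psi,\varphi,\max}K_z$, for $f\in\text{dom}(W_{\psi,\varphi}^*)$ one computes $(W_{\psi,\varphi}^*f)(z)=\langle f,W_{\psi,\varphi}K_z\rangle=\langle f,W_{\psi,\varphi,\max}K_z\rangle=E(\widehat\psi,\widehat\varphi)f(z)$; since the left-hand side lies in $\calF^2$, this shows $f\in\text{dom}(W_{\widehat\psi,\widehat\varphi,\max})$ with $W_{\psi,\varphi}^*f=W_{\psi,\varphi,\max}^*f$, i.e. $W_{\psi,\varphi}^*\preceq W_{\psi,\varphi,\max}^*$. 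The reverse inclusion $W_{\psi,\varphi,\max}^*\preceq W_{\psi,\varphi}^*$ follows because taking adjoints reverses $\preceq$ and $W_{\psi,\varphi}\preceq W_{\psi,\varphi,\max}$. Hence $W_{\psi,\varphi}^*=W_{\psi,\varphi,\max}^*$, and taking adjoints once more of these closed, densely defined operators gives $W_{\psi,\varphi}=W_{\psi,\varphi}^{**}=W_{\psi,\varphi,\max}^{**}=W_{\psi,\varphi,\max}$, which is conclusion (2a). (Alternatively one may first invoke Theorem \ref{cohypo-nor-equiv} to know $W_{\psi,\varphi,\max}$ is cohyponormal, but it is not needed for this direction.)

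I expect the main obstacle to be precisely this second stage — the observation that cohyponormality by itself is already strong enough to pull all the reproducing kernels into the a priori small domain $\text{dom}(W_{\psi,\varphi})$, which is what makes the action of $W_{\psi,\varphi}^*$ computable and identifiable with $W_{\psi,\varphi,\max}^*$. A secondary point requiring care is bookkeeping of the "closed and densely defined" hypotheses every time Theorem \ref{varphi-cohypo}, Lemma \ref{W*Kz-prop}, and the double-adjoint identity $T^{**}=T$ are used, and checking that $\text{dom}(W_{\psi,\varphi,\max})$ is dense (it contains the dense set $\text{dom}(W_{\psi,\varphi})$).
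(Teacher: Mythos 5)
Your proposal is correct, and while it reaches the same intermediate facts as the paper, it organizes the necessity $(1)\Rightarrow(2)$ differently in both stages. For the symbols, the paper first transfers cohyponormality from $W_{\psi,\varphi}$ to $W_{\psi,\varphi,\max}$ (via the chain $\text{dom}(W_{\psi,\varphi,\max}^*)\subseteq\text{dom}(W_{\psi,\varphi}^*)\subseteq\text{dom}(W_{\psi,\varphi})\subseteq\text{dom}(W_{\psi,\varphi,\max})$ together with the norm comparison on $\text{dom}(W_{\psi,\varphi,\max}^*)$) and then quotes Theorem \ref{cohypo-nor-equiv}; you instead run Theorem \ref{varphi-cohypo} and the fixed-point/eigenvector argument directly on $W_{\psi,\varphi}$, which is legitimate because Lemma \ref{W*Kz-prop} and Theorem \ref{varphi-cohypo} are stated for arbitrary densely defined weighted composition operators. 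For (2a) the divergence is more substantial: the paper shows $\text{dom}(W_{\psi,\varphi,\max})=\text{dom}(W_{\psi,\varphi,\max}^*)$ using the integral identity of Lemma \ref{dom-dom} with Remark \ref{f-in-dom} and then squeezes $\text{dom}(W_{\psi,\varphi})$ between these equal sets, whereas you prove $W_{\psi,\varphi}^*=W_{\psi,\varphi,\max}^*$ by evaluating $W_{\psi,\varphi}^*f$ against kernels (valid since cohyponormality forces $K_z\in\text{dom}(W_{\psi,\varphi})$) and then take double adjoints of the two closed, densely defined operators. Your route avoids Lemma \ref{dom-dom} at this step and isolates the clean fact that a cohyponormal restriction and the maximal operator share the same adjoint; the paper's route is more computational but does not need $T^{**}=T$.

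Two small points to patch. Your dismissal of constant $\varphi$ is not right as stated: in this paper ``unbounded weighted composition operator'' does not mean ``not bounded,'' and case (2b)(i) explicitly allows $A=0$, i.e.\ constant $\varphi$ (then $W_{\psi,\varphi,\max}$ is a rank-one normal operator), so you cannot exclude it. The case is harmless, though: a constant $\varphi\equiv B$ is affine with $A=0\ne 1$, and your fixed-point argument at $d=B$ still yields $\psi(z)=Ce^{\overline{B}z}$, which is exactly (2b)(i); stage two is then unchanged. (The paper's own proof of Theorem \ref{cohypo-nor-equiv} invokes Theorem \ref{varphi-cohypo} without remarking on this case either.) Second, the eigenvalue transfer $W^*x=\lambda x\Rightarrow Wx=\overline{\lambda}x$ for cohyponormal $W$ deserves a one-line justification (namely that $W-\overline{\lambda}I$ is again cohyponormal), though the paper also states it without proof.
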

\begin{proof}
$\bullet$ We prove implication $(1)\Longrightarrow(2)$. Suppose that the operator $W_{\psi, \varphi}$ is cohyponormal, which means
$$
\text{dom}(W_{\psi,\varphi}^*)\subseteq\text{dom}(W_{\psi,\varphi}),\quad\|W_{\psi,\varphi}^*f\|\geq\|W_{\psi,\varphi}f\|,\,\forall f\in\text{dom}(W_{\psi,\varphi}^*).
$$
Since $W_{\psi,\varphi}\preceq W_{\psi,\varphi,\max}$, we have
\begin{equation}\label{point-add}
\text{dom}(W_{\psi,\varphi,\max}^*)\subseteq\text{dom}(W_{\psi,\varphi}^*)\subseteq\text{dom}(W_{\psi,\varphi})\subseteq\text{dom}(W_{\psi,\varphi,\max}).
\end{equation}
Let $f\in\text{dom}(W_{\psi,\varphi,\max}^*)$. Also since $W_{\psi,\varphi}\preceq W_{\psi,\varphi,\max}$, we have $W_{\psi,\varphi,\max}^*\preceq W_{\psi,\varphi}^*$, which gives
$$
\|W_{\psi,\varphi,\max}^*f\|=\|W_{\psi,\varphi}^*f\|\geq \|W_{\psi,\varphi}f\|=\|W_{\psi,\varphi,\max}f\|.
$$
Thus, the operator $W_{\psi,\varphi,\max}$ is cohyponormal, and hence, by Theorem \ref{cohypo-nor-equiv}, we get assertion (2b).

To prove (2a), we note that $W_{\psi,\varphi,\max}^*=W_{\widehat{\psi},\widehat{\varphi},\max}$, where $\widehat{\varphi}(z)=\overline{A}z+\overline{D}$, $\widehat{\psi}(z)=\overline{C}e^{z\overline{B}}$. By Lemma \ref{dom-dom} and Remark \ref{f-in-dom}, we see that
$$\text{dom}(W_{\psi,\varphi,\max})=\text{dom}(W_{\widehat{\psi},\widehat{\varphi},\max})=\text{dom}(W_{\psi,\varphi,\max}^*),$$
and so, by \eqref{point-add}, we conclude that
$$
\text{dom}(W_{\psi,\varphi})=\text{dom}(W_{\psi,\varphi,\max}).
$$
Thus, we must have $W_{\psi,\varphi,\max}=W_{\psi,\varphi}$, which gives conclusion (2a).

$\bullet$ The implication $(2)\Longrightarrow(1)$ follows from Theorem \ref{cohypo-nor-equiv}.
\end{proof}

\begin{thm}\label{normal-2-arbi}
Let $W_{\psi,\varphi}$ be an unbounded weighted composition operator, induced by two entire functions $\varphi, \psi$ with $\psi\not\equiv 0$. Then the followings are equivalent.
\begin{enumerate}
\item The operator $W_{\psi,\varphi}$ is normal
\item The operator $W_{\psi,\varphi}$ satisfies
\begin{enumerate}
\item $W_{\psi,\varphi}=W_{\psi,\varphi,\max}$.
\item One of the following cases occurs:
\begin{enumerate}
\item $\varphi(z)=Az+B$, with $A\ne 1$, and $\psi(z)=Ce^{Dz}$, where $$D=\overline{B}(1-A)(1-\overline{A})^{-1}.$$
\item $\varphi(z)=z+B$, and $\psi(z)=Ce^{Dz}$, where $|D|=|B|$.
\end{enumerate}
\end{enumerate}
\end{enumerate}
\end{thm}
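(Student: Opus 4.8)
The plan is to obtain Theorem \ref{normal-2-arbi} by bootstrapping from the two characterizations already in hand: the cohyponormality criterion for operators with arbitrary domains (Theorem \ref{cohypo-nor-equiv--}) and the normality criterion for \emph{maximal} operators (Theorem \ref{cohypo-nor-equiv-}). The structural remark opening Section \ref{s6}, that a normal operator is automatically cohyponormal, is what lets all the heavy domain-and-adjoint analysis be imported wholesale; only one constraint needs to be sharpened afterwards.

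For the implication $(1)\Longrightarrow(2)$ I would argue as follows. Assume $W_{\psi,\varphi}$ is normal. Then it is cohyponormal, so Theorem \ref{cohypo-nor-equiv--} applies and delivers at once that $W_{\psi,\varphi}=W_{\psi,\varphi,\max}$ (this is conclusion (2a)), that $\varphi(z)=Az+B$ and $\psi(z)=Ce^{Dz}$, and that either $A\ne 1$ with $D=\overline{B}(1-A)(1-\overline{A})^{-1}$, or $A=1$ with $|B|\geq|D|$. Since $W_{\psi,\varphi}$ and $W_{\psi,\varphi,\max}$ are now literally the same operator (same domain, same action, hence same adjoint), the maximal operator $W_{\psi,\varphi,\max}$ is itself normal. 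Applying the implication $(1)\Longrightarrow(2)$ of Theorem \ref{cohypo-nor-equiv-} then upgrades, in the case $A=1$, the inequality $|B|\geq|D|$ to the equality $|B|=|D|$. This is precisely conclusion (2b), completing $(1)\Longrightarrow(2)$.

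For the converse $(2)\Longrightarrow(1)$, suppose (2a) and (2b) hold. The symbols then satisfy exactly the hypotheses in item (2) of Theorem \ref{cohypo-nor-equiv-}, so by that theorem $W_{\psi,\varphi,\max}$ is normal; and by (2a) we have $W_{\psi,\varphi}=W_{\psi,\varphi,\max}$, whence $W_{\psi,\varphi}$ is normal. I do not expect a genuine obstacle here: the substance of the statement has already been distilled into Theorems \ref{cohypo-nor-equiv} and \ref{cohypo-nor-equiv-}, and the only points demanding care are (i) checking that normality transfers between $W_{\psi,\varphi}$ and $W_{\psi,\varphi,\max}$, which is immediate once one knows they coincide as operators, and (ii) keeping straight which of the two ``$A=1$'' side conditions, $|B|\geq|D|$ (cohyponormal) versus $|B|=|D|$ (normal), is in force at each step of the chain.
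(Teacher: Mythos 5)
Your proposal is correct and follows the paper's own argument exactly: the forward direction passes through cohyponormality to invoke Theorem \ref{cohypo-nor-equiv--} for maximality and the symbol forms, then applies Theorem \ref{cohypo-nor-equiv-} to the (now maximal) operator to sharpen $|B|\geq|D|$ to $|B|=|D|$, and the converse is Theorem \ref{cohypo-nor-equiv-} combined with $W_{\psi,\varphi}=W_{\psi,\varphi,\max}$. Your write-up is in fact somewhat more explicit than the paper's one-line proof about where each piece of the conclusion comes from.
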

\begin{proof}
By Theorem \ref{cohypo-nor-equiv-}, we have $(2)\Longrightarrow (1)$, while implication $(1)\Longrightarrow (2)$ follows from Theorems \ref{cohypo-nor-equiv-} and \ref{cohypo-nor-equiv--} (since a normal operator is always coyhyponormal).
\end{proof}


\bibliographystyle{plain}
\bibliography{refs}
\end{document}